\def\mhline{\noalign{\ifnum0=`}\fi\hrule height 4\arrayrulewidth \futurelet
   \@tempa\oxhline}
\def\oxhline{\ifx\@tempa\hline\vskip \doublerulesep\fi
      \ifnum0=`{\fi}}
\numberwithin{equation}{section}
\theoremstyle{plain}
\newtheorem{theorem}{Theorem}[section]
\newtheorem{thm}[theorem]{Theorem}
\newtheorem{lemma}[theorem]{Lemma}
\newtheorem{prop}[theorem]{Proposition}
\newtheorem{example}[theorem]{Example}
\newtheorem{defn}[theorem]{Definition}
\newtheorem{cor}[theorem]{Corollary}
\newtheorem{rmk}[theorem]{Remark}
\newcommand{\iso}{\cong}
\newcommand{\arr}{\rightarrow}
\DeclareMathOperator{\rank}{rank}
\newcommand{\R}{\mathbb{R}}
\newcommand{\C}{\mathbb{C}}
\newcommand{\Z}{\mathbb{Z}}
\newcommand{\mcA}{\mathcal{A}}
\newcommand{\mcC}{\mathcal{C}}
\newcommand{\mcG}{\mathcal{G}}
\newcommand{\mcM}{\mathcal{M}}
\newcommand{\mfg}{\mathfrak{g}}
\newcommand{\mfn}{\mathfrak{n}}
\DeclareMathOperator{\vspan}{span}
\DeclareMathOperator{\mfb}{\mathfrak{b}}
\DeclareMathOperator{\Der}{Der}
\DeclareMathOperator{\Exp}{Exp}
\begin{document}

\title{A pattern avoidance criterion for free inversion arrangements}

\author{William Slofstra}
\email{wslofstra@math.ucdavis.edu}

\begin{abstract}
    We show that the hyperplane arrangement of a coconvex set in a finite
    root system is free if and only if it is free in corank $4$. As a
    consequence, we show that the inversion arrangement of a Weyl group element
    $w$ is free if and only if $w$ avoids a finite list of root system
    patterns. As a key part of the proof, we use a recent theorem of 
    Abe and Yoshinaga to show that if the root system does not contain
    any factors of type $C$ or $F$, then Peterson translation of coconvex
    sets preserves freeness. This also allows us to give a
    Kostant-Shapiro-Steinberg rule for the coexponents of a free inversion
    arrangement in any type.
\end{abstract}
\maketitle

\section{Introduction}

A central hyperplane arrangement $\mcA$ in a complex vector space
$\C^l$ is said to be free if its module of derivations $\Der(\mcA)$ is free. If
$\mcA$ is free, then $\Der(\mcA)$ has a homogeneous basis of size $l$, and the
degrees $d_1,\ldots,d_l$ of the generators are called the coexponents of
$\mcA$. By a Theorem of Terao, the Poincare series $\sum t^i \dim H_i (\C^l
\setminus \mcA)$ of the complement of $\mcA$ is then $\prod_i (1+d_i t)$
\cite{Te81}.  The best known free arrangements are the Coxeter arrangements,
cut out by the root systems of finite Coxeter groups, for which the coexponents
are equal to the exponents of the Coxeter group \cite{Te81}.

It is natural to ask when a subarrangement of a Coxeter arrangement is free.
For braid arrangements (i.e.~ Coxeter arrangements of type $A$), the
subarrangements are graphic arrangements, and a theorem of Stanley states that
a graphic arrangement is free if and only if the corresponding graph is chordal
\cite{ER94} \cite{CDFSSTY}. For other types, the only known results focus on certain
classes of subarrangements \cite{ER94} \cite{ST06} \cite{ABCHT13} \cite{OPY08} \cite{Sl13}. In particular if $R$ is a finite crystallographic
root system, and $S \subseteq R^+$ is a lower order ideal in dominance order,
then the arrangement $\mcA(S)$ cut out by $S$ is free by a theorem of Sommers
and Tymoczko (all types except type $E$ \cite{ST06}) and Abe, Barakat, Cuntz,
Hoge, and Terao (all types \cite{ABCHT13}). In this case, the coexponents of
$\mcA(S)$ can be determined from the heights of the roots in $S$. If $w$ is a
rationally smooth element of the Weyl group $W(R)$, then the arrangement
$\mcA(I(w))$ cut out by the inversion set $I(w)$ of $w$ is also free, by a
theorem of Oh, Postnikov, and Yoo (type A \cite{OPY08}) and the author (all
types \cite{Sl13}). In this case, the coexponents of $\mcA(I(w))$ are equal to
the exponents of $w$. 

Let $X$ be the flag variety of type $R$, and let $X(w)$ be the Schubert variety
indexed by $w \in W(R)$. The maximal torus $T$ of the underlying semisimple
group acts on $X(w)$, and the $T$-fixed points are the elements $x$ of $W(R)$
which are less than or equal to $w$ in Bruhat order. The $T$-weights of the tangent
space $T_w X(w)$ are the inversions $I(w) \subseteq R^+$ of $w$, while the
$T$-weights in the tangent space $T_e X(w)$ at the identity $e \in W(R)$
correspond to a lower order ideal $S(w) \subseteq R^+$. If $X(w)$ is smooth, then
$I(w)$ and $S(w)$ have the same number of elements, and both $\mcA(I(w))$ and
$\mcA(S(w))$ are free. A theorem of Akyildiz and Carrell \cite{AC12} states
that the coexpoents of $\mcA(S(w))$ are equal to the exponents of $w$, so in
the smooth case the coexponents of $\mcA(S(w))$ are equal to the coexponents of
$\mcA(I(w))$. This leads to the question of whether $\mcA(I(w))$ and
$\mcA(S(w))$ can be compared directly.

In this paper, we give an affirmative answer to this question using Peterson
translation. If $x \leq y \leq w$, where $x = r_{\alpha} y$, then there is a
$T$-invariant curve in $X(w)$ connecting $x$ and $y$, and any $T$-submodule $M
\subseteq T_y X(w)$ can be translated along this curve to a $T$-submodule of $T_x
X(w)$ \cite{CK03}. This geometric Peterson translation procedure defines a
combinatorial Peterson translate on coconvex sets of $R^+$. We say that a
coconvex set $S$ is Peterson-free if $\mcA(S)$ is free, and $\mcA(\tau(S))$ is
free for all Peterson translates $\tau(S)$ of $S$. All lower order ideals are
trivially Peterson-free, and an inversion set is Peterson-free if and only if
it is free (the same is true for coconvex sets as long as $R$ contains no
factors of type $C$ or $F$). What's more, the class of Peterson-free coconvex
sets is closed under Peterson translation, and thus any Peterson-free coconvex
set can be translated to a lower order ideal with the same coexponents.
Consequently if $X(w)$ is smooth then we can directly compare $\mcA(I(w))$ and
$\mcA(S(w))$ by repeatedly translating $I(w)$ from $w$ to the identity.  We
extend these techniques further to show that the arrangement $\mcA(S)$ cut out
by a coconvex set $S$ is free if and only if the localizations $\mcA(S)_X$ are
free for every flat of $\mcA(S)$ of corank $\leq 4$. This gives a root-system
pattern avoidance criterion for the freeness of $\mcA(I(w))$. The proofs of
these results are for the most part short, relying on a theorem of Abe
and Yoshinaga \cite{AY13} to reduce to low rank, where results can be
checked by computer. The exception is type $C_n$, where we rely on a
characterization of free subarrangements containing $A_{n-1}$ due to Edelman
and Reiner \cite{ER94} to circumvent the fact that not all free coconvex
sets in $C_n$ are Peterson-free. 

The rest of the paper is organized as follows. In Section \ref{S:pattern}, we
state pattern avoidance criterions for the freeness of arrangements cut out by
coconvex and inversion sets. In Section \ref{S:peterson} we state and
prove the main properties of combinatorial Peterson translation. In Section
\ref{S:ptgraph} we develop some additional tools, such as Peterson-freeness, to
study Peterson translation in types $C$ and $F$. In Section \ref{S:proofs} we
prove the pattern avoidance results, and explain some of the technical aspects
of the computer checks used throughout the paper. Finally, in Section
\ref{S:geometry} we explain how the combinatorial results are related to the
geometric Peterson translate. 

\subsection{Acknowledgements}
I thank Jim Carrell for suggesting that Peterson translation could be applied
to inversion arrangements, Ed Richmond for many interesting conversations on
Peterson translation, and Alejandro Morales, Stefan Tohaneanu, and Alex Woo for helpful
comments.

\subsection{Notation}
$R$ will refer to a finite crystallographic root system in ambient Euclidean
space $V$, with positive and negative roots $R^+$ and $R^-$ respectively.  We
assume without loss of generality that $R$ spans $V$. Given $S \subseteq R$ with
the property that $S \cap -S = \emptyset$, we let $\mcA(S)$ denote the real
arrangement in $V^*$ cut out by $S$. 

$L(\mcA)$ will denote the intersection lattice of an arrangement $\mcA$. Given
$X \in L(\mcA)$, the localization $\mcA_X$ is the subarrangement of $\mcA$
consisting of those hyperplanes which contain $X$.

A real arrangement $\mcA$ in a real vector space $V^*$ has a complexification
$\mcA_{\C}$ in $V^*_{\C} = V^* \otimes \C$. We say that a real arrangement
$\mcA$ is free if and only if $\mcA_{\C}$ is free.  The intersection lattices
$L(\mcA)$ and $L(\mcA_{\C})$ are isomorphic, and complexification commutes with
localization.

\section{Coconvex sets, freeness, and pattern avoidance}\label{S:pattern}

A subset $S$ of $R^+$ is convex if $\alpha, \beta \in S$, $\alpha + \beta \in
R^+$ implies that $\alpha + \beta \in S$, coconvex if $R^+ \setminus S$ is
convex, and biconvex if $S$ is convex and coconvex. The main result of this
paper is a criterion for $\mcA(S)$ to be free when $S$ is a coconvex set.  Let
$X$ be a flat of an arrangement $\mcA$. Since $X$ is the center of $\mcA_X$,
the rank of $\mcA_X$ is equal to the corank of $X$. If $\mcA$ is free, then so
is $\mcA_X$ \cite[Theorem 4.37]{OT92}. For arrangements $\mcA(S)$ of coconvex
sets $S$, we show that this result can be reversed while only checking flats
$X$ of low corank (and hence arrangements $\mcA(S)_X$ of low rank). 
\begin{thm}\label{T:coconvex}
    Let $S$ be a coconvex subset of $R^+$. Then $\mcA(S)$ is free if and only
    if $\mcA(S)_X$ is free for every flat $X \in L(\mcA(S))$ of corank $\leq
    4$. 
\end{thm}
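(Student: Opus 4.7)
The forward direction is an immediate application of \cite[Theorem 4.37]{OT92}: every localization of a free arrangement is free, so in particular $\mcA(S)_X$ is free for every flat $X$ of corank $\leq 4$.

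For the converse, I would argue by induction on $\ell = \rank \mcA(S)$. The base case $\ell \leq 4$ is immediate, since then $\mcA(S)$ itself is the localization at its center, a flat of corank $\ell \leq 4$. For the inductive step, I would first observe that localizations of coconvex arrangements are again coconvex arrangements in parabolic root subsystems: if $X$ is a flat of $\mcA(S)$ and $R_X = \{\alpha \in R : X \subseteq \ker\alpha\}$, then $\mcA(S)_X = \mcA(S \cap R_X^+)$, and $S \cap R_X^+$ is coconvex in $R_X^+$ because the intersection of a convex subset of $R^+$ with $R_X^+$ remains convex. The corank-$\leq 4$ localizations of $\mcA(S)_X$ are themselves corank-$\leq 4$ localizations of $\mcA(S)$, so the inductive hypothesis yields that every proper localization of $\mcA(S)$ is free.

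To conclude that $\mcA(S)$ itself is free from this local freeness, I would apply the theorem of Abe--Yoshinaga \cite{AY13} in the manner advertised in the introduction, using it to reduce the question to checking freeness in a finite list of coconvex arrangements of low rank. In the cases where $R$ has no factors of type $C$ or $F$, the Peterson translation results of Sections \ref{S:peterson} and \ref{S:ptgraph} provide the key additional structural input: one may translate $S$ to a lower order ideal, for which freeness is guaranteed by Sommers--Tymoczko \cite{ST06} and Abe--Barakat--Cuntz--Hoge--Terao \cite{ABCHT13}, with the corank-$\leq 4$ data controlled under translation in the sense required.

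The hard part will be root systems containing $C_n$ or $F_4$ factors, where not every free coconvex set is Peterson-free and so the translation strategy cannot be applied directly. Here I would follow the approach flagged in the introduction: substitute the Edelman--Reiner characterization of free subarrangements of $C_n$ containing $A_{n-1}$ \cite{ER94} for the missing Peterson-freeness, and verify the low-rank base cases produced by the Abe--Yoshinaga reduction by a direct computer enumeration of coconvex subsets in root systems of rank $\leq 4$.
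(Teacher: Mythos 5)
Your outline matches the paper's strategy in broad strokes --- Abe--Yoshinaga plus Peterson translation to lower order ideals, with Edelman--Reiner and computer checks handling type $C$ --- but the two steps you treat as routine are where all the work lies, and as written the argument has genuine gaps there.

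First, the induction. Induction on rank does not drive the proof: knowing that every proper localization of $\mcA(S)$ is free is exactly the hypothesis you are trying to exploit, not progress toward the conclusion, and the Abe--Yoshinaga theorem needs as input a hyperplane $H$ with $\widetilde{\mcA(S)}^H$ free. The paper obtains this by inducting on the sum of the heights of the roots in $S$ along Peterson translation: since $\widetilde{\mcA(S)}^{\ker\alpha} \iso \widetilde{\mcA(\tau(S,\alpha))}^{\ker\alpha}$, it suffices to know that $\mcA(\tau(S,\alpha))$ is free, which is a translate with strictly smaller height sum. The delicate point --- which you flag only as ``corank-$\leq 4$ data controlled under translation in the sense required'' --- is that the corank-$\leq k$ local-freeness hypothesis must be shown to pass from $S$ to $\tau(S,\alpha)$. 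This is done via the locality $\tau(S,\alpha)_U = \tau(S_U,\alpha)$ for $\alpha \in U$: a corank-$k$ flat of $\mcA(\tau(S,\alpha))$ determines a subspace $U$, one enlarges to $U' = U + \R\alpha$ of dimension $\leq k+1$, and one needs the full statement of the theorem already verified for the rank-$(k+1)$ subsystem $R_{U'}$. This is why the computer checks occur in rank $5$ ($D_5$, and $A_5$ via the known type-$A$ result), not in rank $\leq 4$ as you propose; indeed the corank-$3$ version of the theorem genuinely fails for $D_4$, which is the reason the statement requires corank $4$ at all. This mechanism is Proposition \ref{P:Fk} in the paper and is the heart of the proof.

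Second, type $C$. Naming Edelman--Reiner is not enough: you must identify the arrangements to which their criterion applies. The paper restricts the translation graph to freeness-preserving edges and then proves structural lemmas (Lemmas \ref{L:prooflem1}--\ref{L:prooflem3}) showing that a terminal vertex $S$ of this restricted graph in $C_n$ either satisfies $S \setminus C_{n-1}^+ \subseteq A_{n-1}^+$, in which case a modular coatom reduces freeness to $C_{n-1}$, or contains all of $A_{n-1}^+$, which is precisely the hypothesis of Edelman--Reiner's Theorem 4.6. Without this dichotomy the Edelman--Reiner result has no purchase on a general coconvex set. Supplying these two bridges would turn your plan into the paper's proof.
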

Every arrangement of rank $\leq 2$ is free. If $\mcA(S)$ has rank $\geq 4$,
then $\mcA(S)$ will be free in Theorem \ref{T:coconvex} if and only if
$\mcA(S)_X$ is free for every flat $X$ of corank $4$.  We will show in the proof
that if $R$ contains no factors of types $D$, $E$, or $F$, then $\mcA(S)$ is
free if and only if $\mcA_X$ is free for every flat $X$ of corank $3$.
Theorem \ref{T:coconvex} will be proved in Section \ref{S:proofs}.

If $U$ is a subspace of $V$, then $R_U = R \cap U$ is a root system with
positive and negative roots $R_U^+ = R^+ \cap U$ and $R_U^- = R^- \cap U$
respectively. Let $R_0$ be another root system in an ambient space $V_0$, and
fix $S_0 \subseteq R_0^+$. We say that a subset $S \subseteq R^+$ contains the
pattern $(S_0, R_0)$ if there is a subspace $U$ of $V$ such that $R_U \iso
R_0$, and this isomorphism identifies $S_U = S \cap U \subseteq R^+_U$ with
$S_0$. If $S$ does not contain $(S_0, R_0)$, then we say that $S$ avoids $(S_0,
R_0)$. If a coconvex (resp. convex, biconvex) set $S$ contains a pattern $(S_0,
R_0)$, then $S_0$ is also coconvex (resp. convex, biconvex). We say that a
pattern $(S_0, R_0)$ is coconvex (resp. convex, biconvex) if $S_0$ is coconvex
(resp. convex, biconvex).  

Given an arbitrary subset $S \subseteq R^+$ and a subspace $U$, let $X' =
\bigcap_{\alpha \in U} \ker \alpha$, and let $X$ be the smallest flat of
$\mcA(S)$ containing $X'$ (if $U$ is spanned by elements of $S$ then $X = X'$,
whereas in general $X$ is the sum of $X'$ plus the center of $\mcA(S)$). The
arrangement $\mcA(S_U)$ in $U^*$ is linearly isomorphic to $\mcA(S)_X / X'$, a
localization followed by a quotient, and consequently $\mcA(S_U)$ is free if
and only if $\mcA(S)_X$ is free. Thus if $S$ contains the pattern $(S_0, R_0)$
and $\mcA(S)$ is free then $\mcA(S_0)$ is also free. Conversely if $S$ contains
$(S_0, R_0)$ and $\mcA(S_0)$ is not free then $\mcA(S)$ cannot be free. Theorem
\ref{T:coconvex} states that $\mcA(S)$ is free if and only if $S$ avoids every
non-free pattern $(S_0, R_0)$ in root systems $R_0$ of rank $\leq 4$. To
determine exactly which patterns must be avoided, we look at minimal patterns:
\begin{defn}
    We say that a pattern $(S_0,R_0)$ is a \emph{minimal non-free pattern} if
    $\mcA(S_0)$ is non-free, and there is no proper subspace $U_0$ of the
    ambient space $V_0$ such that $\mcA\left((S_0)_{U_0}\right)$ is a non-free
    arrangement in $U_0^*$. 
\end{defn}
Note that if $(S_0, R_0)$ is minimal, then the vectors in $S_0$ span the ambient
space $V_0$.  Theorem \ref{T:coconvex} can now be phrased in terms of pattern
avoidance.
\begin{cor}\label{C:coconvexpat}
    Let $R$ be a finite crystallographic root system, and let $S \subseteq R^+$
    be a coconvex set. Then $\mcA(S)$ is free if and only if $S$ avoids the
    minimal non-free coconvex patterns $(S_0, R_0)$, all of which are contained
    in the root systems $R_0 = A_3$, $B_3$, $C_3$, $D_4$, and $F_4$.  
\end{cor}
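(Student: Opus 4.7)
The plan is to derive Corollary~\ref{C:coconvexpat} from Theorem~\ref{T:coconvex} together with its type-$A$/$B$/$C$ refinement (the remark that when $R$ has no factor of type $D$, $E$, or $F$, freeness can be tested at corank $3$). The forward implication is immediate from the discussion preceding the definition of minimal pattern: every pattern $(S_0,R_0)$ contained in $S$ gives $\mcA(S_0) \iso \mcA(S)_X/X'$, a localization followed by a quotient, so freeness of $\mcA(S)$ propagates to every sub-pattern. Therefore a free $\mcA(S)$ avoids every non-free coconvex pattern.

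For the converse I would argue the contrapositive. Assume $\mcA(S)$ is not free. By Theorem~\ref{T:coconvex}, some flat $X$ of corank $\leq 4$ satisfies $\mcA(S)_X$ non-free. Put $U = X^{\perp} \subseteq V$, a subspace of dimension $\leq 4$ spanned by roots of $S$. Then $R_U = R \cap U$ is a sub-root system of rank $\leq 4$, $S_U = S \cap U$ is coconvex in $R_U^{+}$ (since $R_U^{+}\setminus S_U = (R^+\setminus S)\cap U$ inherits convexity), and $\mcA(S_U)$ is non-free. I would then extract a minimal non-free pattern inside $(S_U, R_U)$: among all subspaces $U' \subseteq U$ with $\mcA(S \cap U')$ non-free, pick one of minimal dimension, and replace it by $\vspan(R \cap U')$ if necessary (this only discards a central factor from the arrangement and preserves non-freeness). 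Setting $R_0 = R \cap U'$, $V_0 = U'$, and $S_0 = S \cap U'$ yields a minimal non-free coconvex pattern contained in $S$ with $\rank R_0 \leq 4$.

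It remains to verify that $R_0 \in \{A_3, B_3, C_3, D_4, F_4\}$. All arrangements of rank $\leq 2$ are free, so $\rank R_0 \geq 3$. If $R_0 = R_1 \times R_2$ were reducible with nontrivial factors, then $\mcA(S_0)$ would split as $\mcA(S_1)\times \mcA(S_2)$ and non-freeness of one factor would give a proper subspace of $V_0$ on which the restriction is still non-free, contradicting minimality. So $R_0$ is irreducible of rank $3$ or $4$, forcing
\[ R_0 \in \{A_3, B_3, C_3, A_4, B_4, C_4, D_4, F_4\}. \]
The remaining cases $A_4, B_4, C_4$ are ruled out by the corank-$3$ refinement of Theorem~\ref{T:coconvex}: since these types lack $D$, $E$, or $F$ factors, non-freeness of $\mcA(S_0)$ must already appear at some corank-$3$ flat $Y$, whose orthogonal complement is a proper $3$-dimensional subspace of $V_0$ on which $S_0$ restricts non-freely, again contradicting minimality. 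Hence $S$ contains a minimal non-free coconvex pattern with $R_0 \in \{A_3, B_3, C_3, D_4, F_4\}$, contradicting the hypothesis.

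Granted Theorem~\ref{T:coconvex} and its refinement, the main obstacle is the minimality/reducibility bookkeeping: one must verify that coconvexity restricts correctly, that $\mcA$ distributes over root-system products, and that the normalization $U' \leadsto \vspan(R \cap U')$ produces an honest pattern in the sense of the definition. These checks are routine but must be done with some care, since the argument is essentially a careful repackaging of definitions layered on top of the main theorem.
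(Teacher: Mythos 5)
Your proposal is correct and follows essentially the same route as the paper: the paper proves Theorem~\ref{T:coconvex} and Corollary~\ref{C:coconvexpat} together, with the corollary obtained by exactly your reduction (minimality forces $S_0$ to span, hence $\rank R_0 \le 4$ by the theorem; reducible $R_0$ is excluded because freeness of a product is detected factorwise; and $A_4$, $B_4$, $C_4$ are excluded by the corank-$3$ statement for root systems without $D$, $E$, $F$ factors). The only cosmetic difference is that the paper gets the exclusion of $A_4$--$C_4$ directly from the computer enumeration of minimal patterns and then deduces $(L_3)$ for those types, whereas you run that equivalence in the other direction.
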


The notion of root system pattern avoidance defined above is inspired by root
system pattern avoidance for Weyl groups.  Let $W(R)$ denote the Weyl group of
$R$. The inversion set $I(w)$ of $w \in W(R)$ is the set $\{\alpha \in R^+ :
w^{-1} \alpha \in R^-\}$.  The inversion set $I(w)$ uniquely identifies $w$,
and a subset $S \subseteq R^+$ is biconvex if and only if $S = I(w)$ for some
element $w \in W(R)$. An element $w \in W(R)$ is said to contain (resp. avoid)
the pattern $(w_0, R_0)$ if $I(w)$ contains (resp.  avoids) the pattern
$(I(w_0), R_0)$. Thus root system pattern avoidance for Weyl groups is
equivalent to the definition of pattern avoidance for biconvex sets given
above. Root system pattern avoidance has been used by Billey and Postnikov
\cite{BP05} to characterize rationally smooth elements in $W(R)$. In type $A$,
root system pattern avoidance is roughly equivalent to the usual pattern avoidance for
permutations. 

We say that $(w_0, R_0)$ is a minimal non-free pattern if $(I(w_0), R_0)$ is
minimal non-free. Since Corollary \ref{C:coconvexpat} holds in particular for
biconvex sets, we have:
\begin{cor}\label{C:biconvexpat}
    Let $R$ be a finite crystallographic root system, and let $w \in W(R)$.
    Then $\mcA(I(w))$ is free if and only if $w$ avoids the minimal non-free
    patterns $(w_0, R_0)$, all of which are contained in the root systems
    $R_0 = A_3$, $B_3$, $C_3$, $D_4$, and $F_4$.
\end{cor}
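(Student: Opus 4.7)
The plan is to derive Corollary \ref{C:biconvexpat} directly from Corollary \ref{C:coconvexpat} by observing that every inversion set is biconvex (hence coconvex), and that biconvexity is inherited by patterns. So the proof amounts to unwinding definitions; there is no substantive new content beyond Corollary \ref{C:coconvexpat}.

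First I would recall that for any $w \in W(R)$ the inversion set $I(w) \subseteq R^+$ is biconvex, so in particular coconvex. Corollary \ref{C:coconvexpat} then applies to $S = I(w)$: the arrangement $\mcA(I(w))$ is free if and only if $I(w)$ avoids every minimal non-free coconvex pattern $(S_0, R_0)$, with all such patterns contained in $A_3, B_3, C_3, D_4, F_4$.

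Next I would use the remark made earlier in Section \ref{S:pattern} that if a biconvex set $S$ contains a pattern $(S_0, R_0)$, then $S_0$ is automatically biconvex. Applied to $S = I(w)$, this shows that any coconvex pattern actually contained in $I(w)$ is biconvex, and in particular of the form $(I(w_0), R_0)$ for some $w_0 \in W(R_0)$. Hence we may restrict attention to minimal non-free coconvex patterns $(S_0, R_0)$ that happen to be biconvex; these are exactly the minimal non-free patterns $(w_0, R_0)$ in the sense defined just before the corollary statement. By definition, $w$ contains $(w_0, R_0)$ if and only if $I(w)$ contains $(I(w_0), R_0)$, so the pattern-avoidance criterion for $I(w)$ translates directly into the pattern-avoidance criterion for $w$.

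Finally I would note that the list of ambient root systems $A_3, B_3, C_3, D_4, F_4$ is inherited without change from Corollary \ref{C:coconvexpat}, since the subclass of biconvex minimal patterns sits inside the coconvex minimal patterns. There is no real obstacle here; the only thing to check is the definitional compatibility between \emph{minimal non-free pattern} for coconvex sets and for Weyl group elements, which is immediate from the definition given just before the corollary.
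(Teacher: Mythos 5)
Your proposal is correct and matches the paper's own (one-line) justification: the paper derives Corollary \ref{C:biconvexpat} by noting that Corollary \ref{C:coconvexpat} applies in particular to biconvex sets, exactly as you do, with the same use of the facts that $I(w)$ is biconvex, that patterns of biconvex sets are biconvex, and that biconvex sets are precisely inversion sets.
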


The number of minimal patterns in Corollaries \ref{C:coconvexpat} and
\ref{C:biconvexpat} are given in Table \ref{TBL:badpat}. The minimal non-free
patterns $(w_0, R_0)$ are explicitly listed in Table \ref{TBL:badpatlist},
where we use the same Dynkin diagram labelling as in \cite{BP05} and
\cite{Sl13}, with $s_i$ referring to the $i$th simple reflection. To save
space in Table \ref{TBL:badpatlist}, we use the notation $[a,b,\ldots]$ to
refer to a list of optional terms, so for example $[a,b]c$ would refer to the
three terms $c$, $ac$, and $bc$. Tables \ref{TBL:badpat} and
\ref{TBL:badpatlist} were constructed using an exhaustive computer search,
which we describe further in Section \ref{S:proofs}.

Given $\alpha \in R$, let $\check{\alpha}$ denote the coroot $2\alpha /
(\alpha,\alpha)$ in the dual root system $\check{R}$. Given $S \subseteq R^+$,
let $\check{S} = \{\check{\alpha} : \alpha \in S\} \subseteq \check{R}^+$. Then
$S$ is a biconvex set in $R$ if and only if $\check{S}$ is a biconvex set in
$\check{R}$, and $\mcA(S) = \mcA(\check{S})$. Similarly if $\sigma$ is a
diagram automorphism of $R$, then $\sigma$ sends biconvex sets to biconvex
sets, and $\mcA(\sigma(S)) \iso \mcA(S)$. These two features can be seen in
Table \ref{TBL:badpatlist}. For example, $B_3$ and $C_3$ are dual, so $W(B_3)
\iso W(C_3)$ have the same minimal non-free biconvex patterns.  The number of minimal
non-free coconvex patterns is different, however. $D_4$ has a diagram
automorphism $\sigma$ of order three, and the elements $s_i s_j s_3 s_2 s_1 s_3
s_4 s_2 s_i s_j$ all lie in the same $\sigma$ orbit. Finally, the root system $F_4$ is
self-dual, with the corresponding automorphism on $W(F_4)$ given by a diagram
automorphism $\sigma'$ of the Coxeter group. The first pattern
listed in Table \ref{TBL:badpatlist} for $F_4$ is invariant under $\sigma'$, while the
last two lie in the same $\sigma'$-orbit. Also note that if $S$ contains a
pattern $(S_0, R_0)$, and $\sigma$ is a diagram automorphism of $S_0$, then $S$
also contains $(\sigma(S_0), R_0)$, so technically there is some redundancy in
listing two minimal non-free patterns in the same $\sigma$-orbit. 

\begin{table}
    \begin{tabular}{ccc}
        $R_0$ & Weyl group elements $w_0$ & Coconvex sets $S_0$ \\
        $A_3$ & 1 & 3 \\
        $B_3$ & 7 & 42 \\
        $C_3$ & 7 & 50 \\
        $D_4$ & 4 & 21 \\
        $F_4$ & 3 & 391 \\
    \end{tabular}
    \caption{The number of minimal non-free patterns $(w_0, R_0)$ (resp.
        $(S_0, R_0)$).} 
    \label{TBL:badpat}.
\end{table}

\begin{table}
    \begin{tabular}{lll}
        $R_0$ & Elements $w_0$\\
        $A_3$ & $s_2 s_1 s_3 s_2$ \\
        $B_3$/$C_3$ & $[s_3] s_2 s_1 s_3 s_2 [s_3]$, $s_2 s_1 s_3 s_2 s_1 s_3 [s_2]$, $s_1 s_3 s_2 s_1 s_3 s_2$ \\
        $D_4$ & $s_2 s_1 s_3 s_4 s_2$, $s_i s_j s_2 s_1 s_3 s_4 s_2 s_i s_j$, $i,j \in \{1,3,4\}$, $i<j$ \\
        $F_4$ & $s_4 s_3 s_2 s_3 s_4 s_1 s_2 s_3 s_4 s_2 s_1 s_3 s_2 s_1 s_3 s_2 s_4 s_3 s_2 s_1$, 
                $s_2 s_1 s_4 s_3 s_4 s_2 s_3 s_1$, $s_3 s_4 s_1 s_2 s_1 s_3 s_2 s_4$ \\
    \end{tabular}
    \caption{List of elements $w_0 \in W(R_0)$ such that $(w_0,R_0)$ is a 
        minimal non-free pattern.}
    \label{TBL:badpatlist}
\end{table}

\begin{example}\label{Ex:typeA}
    Let $e_1,\ldots,e_{n+1}$ denote the standard basis of $\R^{n+1}$. The root
    system $A_n$ is usually presented as $R = \{e_j - e_i : i \neq j\}$ in
    ambient space $V = \vspan\{e_{i+1} - e_{i} : i=1,\ldots,n\}$. In this
    presentation, we take $R^+ = \{e_j - e_i : 1 \leq i < j \leq n+1 \}$.
    Any subset $S \subseteq R^+$ corresponds to a graph $G(S)$ with vertex set
    $\{1,\ldots,n+1\}$ and edge set $\{ij : e_j - e_i \in S\}$. As mentioned
    in the introduction, the arrangement $\mcA(S)$ is free if and only if
    $G(S)$ is chordal, meaning that every cycle of length $\geq 4$ has a
    chord. 

    Let $s_i$ be the simple reflection corresponding to simple root $\alpha_i =
    e_{i+1} - e_{i}$, and let $w = s_2 s_1 s_3 s_2$ in $W(A_3)$. Then
    \begin{equation*}
        I(w) = \{\alpha_2, \alpha_1 + \alpha_2, \alpha_2 + \alpha_3, \alpha_1
            + \alpha_2 + \alpha_3\} = \{e_3 - e_2, e_3 - e_1, e_4-e_2, e_4 - e_1\}.
    \end{equation*}
    The associated graph $G(I(w))$ is 
    \begin{equation*}
        \xymatrix{ 2 \ar@{-}[r] \ar@{-}[d] & 3 \ar@{-}[d] \\
                         4 \ar@{-}[r] & 1 \\ }
    \end{equation*}
    Thus $G(I(w))$ is not chordal, and $w$ is the unique element of $W(A_3)$
    for which $\mcA(I(w))$ is not free.
\end{example}


\section{Combinatorial Peterson translation}\label{S:peterson} 
In this section we introduce a translation procedure on coconvex subsets $S
\subseteq R^+$ which is analogous to Peterson translation on the tangent space
of a Schubert variety. This procedure is essential to the proof of Theorem
\ref{T:coconvex}, and can be used to calculate coexponents of free inversion
arrangements. However, to see that this procedure is correct in types $C$ and
$F$ we rely on the more sophisticated analysis presented in the next section.

Let $\preceq$ denote the dominance ordering on $R^+$, so $\alpha \preceq \beta$
if and only if $\beta -\alpha$ is a non-negative linear combination of simple
roots. A subset $S \subseteq R^+$ is a lower order ideal of $R^+$ if $\beta \in
S$, $\alpha \preceq \beta$ implies that $\alpha \in S$. All lower order ideals
are coconvex. If $R^+$ has simple roots $\{\alpha_i\}$, and $\alpha = \sum n_i
\alpha_i \in R^+$, then the height of $\alpha$ is $\sum n_i$. Given a lower order
ideal $S$, let $h_i$, $i \geq 1$, denote the number of elements of height $i$,
and set $h_0 = l$, where $l$ is the rank of $R$. In a lower order ideal, the
number $h_i - h_{i+1}$ is always non-negative, so we can define the exponent
set $\Exp(S)$ of $S$ to be a multi-set of non-negative integers in which $i$
appears with multiplicity $h_i - h_{i+1}$. If $S = R^+$ then $\Exp(S)$ is the
set of exponents of $W$ by a theorem of Kostant \cite{Ko59}, and this is
sometimes called the Kostant-Macdonald-Shapiro-Steinberg rule for the
exponents of $W$. As mentioned in the introduction, we will need:
\begin{thm}[\cite{ST06}, \cite{ABCHT13}]\label{T:idealfree}
    If $S$ is a lower order ideal in $R^+$, then $\mcA(S)$ is free with
    coexponents $\Exp(S)$.
\end{thm}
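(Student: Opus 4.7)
The plan is to proceed by induction on the cardinality of $S$, using Terao's addition-deletion theorem. Let $\beta \in S$ be maximal in dominance order; since $S$ is a lower order ideal, $S' := S \setminus \{\beta\}$ is again a lower order ideal, and by induction $\mcA(S')$ is free with coexponents $\Exp(S')$. Writing $k$ for the height of $\beta$, a short telescoping computation on the multiplicities $h_i - h_{i+1}$ shows that $\Exp(S)$ differs from $\Exp(S')$ exactly by removing one copy of $k-1$ and adding one copy of $k$, precisely the exponent shift predicted by addition-deletion when one adjoins the hyperplane $H_\beta$ to $\mcA(S')$.

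By Terao's addition theorem it therefore suffices to prove that the restriction $\mcA(S)^{H_\beta} := \{H_\alpha \cap H_\beta : \alpha \in S'\}$, viewed as an arrangement inside $H_\beta$, is free with coexponents $\Exp(S') \setminus \{k-1\}$. In general this restriction is not of the form $\mcA(T)$ for a lower order ideal $T$ in a smaller root system, so the induction does not restart on a simpler instance; this is the crux of the difficulty.

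My approach to the restriction would be a rank-two case analysis: for each $\alpha \in S'$, examine the rank-$2$ root subsystem $R \cap \vspan(\alpha,\beta)$ and use the maximality of $\beta$ together with the lower-order-ideal hypothesis to determine which of its positive roots lie in $S'$. This controls the multiplicity with which each codimension-$2$ flat $H_\alpha \cap H_\beta$ is hit and should be enough to pin down the coexponents of the restriction through the Terao factorization of the characteristic polynomial. The case analysis is substantially more delicate in types $B$, $C$, $F$, $G$ because of the two root lengths, and is the main technical step. An alternative, often more tractable, route is to bundle the addition step for all roots of a fixed height into a single inductive move and apply an Abe-style Multiple Addition Theorem using the filtration $S_k = \{\alpha \in S : \text{height}(\alpha) \leq k\}$; the combinatorial hypotheses to verify reduce to the same rank-two analysis, but one never has to form the restriction explicitly.
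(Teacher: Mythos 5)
This statement is quoted in the paper as an external theorem of Sommers--Tymoczko and Abe--Barakat--Cuntz--Hoge--Terao; the paper gives no proof of it, so your proposal can only be measured against the published proofs it cites. Your sketch does correctly identify the strategy that eventually worked: single-hyperplane addition-deletion stalls because the restriction $\mcA(S)^{H_\beta}$ is not the arrangement of a lower order ideal in a smaller root system, and the fix is to add all roots of a given height at once via the height filtration $S_k$ and a Multiple Addition Theorem. That is essentially the architecture of \cite{ABCHT13}. Your telescoping computation of how $\Exp(S)$ changes when a maximal root of height $k$ is removed is also correct.

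However, there is a genuine gap: everything you defer as ``the main technical step'' is the entire content of the theorem. The hypotheses of the Multiple Addition Theorem do not reduce to a routine rank-two case analysis on subsystems $R \cap \vspan(\alpha,\beta)$. One must show, for each hyperplane $H_\beta$ added at height $k$, that the number of codimension-two flats of the smaller arrangement lying on $H_\beta$ is exactly $|S_{k-1}| - (k-2)$ (equivalently, that $k-1$ occurs as a coexponent with the right multiplicity and that $H_\beta$ meets the lower arrangement in the prescribed way); in \cite{ABCHT13} this rests on a nontrivial global argument about ideal exponents and dual partitions, not on local inspection of rank-two subsystems. Indeed, the local/case-by-case approach of Sommers and Tymoczko is precisely what failed to close type $E$, which is why the theorem remained open until the MAT was introduced. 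As written, your proposal is a correct plan with the decisive lemma unproved, so it does not constitute a proof.
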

Given $\alpha \in R^+$, an $\alpha$-string is a subset of $R^+$ of the form
$\{\beta, \beta+\alpha, \beta+2\alpha,\ldots,\beta+k\alpha\}$, where
$\beta-\alpha \not\in R^+$ and $\beta+(k+1)\alpha \not\in R^+$. The set of
$\alpha$-strings partitions $R^+$. The Peterson translate of a subset $S
\subseteq R^+$ compresses each $\alpha$-string:
\begin{defn}
    Given $S \subseteq R^+$, $\alpha \in R^+$, we define the Peterson translate
    $\tau(S,\alpha)$ of $S$ by $\alpha$ as follows:
    \begin{itemize}
        \item If $S$ is a subset of an $\alpha$-string $\{\beta, \beta+\alpha,
            \ldots, \beta+k\alpha\}$ in $R^+$, so $S = \{\beta+i_1 \alpha,
            \ldots, \beta+i_r \alpha\}$, then $\tau(S, \alpha) = \{\beta,
            \beta+\alpha,\ldots, \beta+(r-1)\alpha\}$.
        \item For a general subset $S$ of $R^+$, let $S = \bigcup S_i$ be the
            partition of $S$ induced by the partition of $R^+$ into
            $\alpha$-strings. Then $\tau(S,\alpha) = \bigcup \tau(S_i,\alpha)$. 
    \end{itemize}
\end{defn}
In Section \ref{S:geometry} we will show that this definition is equivalent to
a geometric formula for Peterson translation given by Carrell and Kuttler \cite{CK03}.
\begin{example}
    In type $A$, all $\alpha$-strings have size $1$ or $2$. Using the notation
    of Example \ref{Ex:typeA}, the $\alpha_1$-strings can be arranged as 
    \begin{equation*}
        \xymatrix@R=0pt{ & \alpha_1 + \alpha_2 + \alpha_3 \ar@{-}[rd] & \\
                \alpha_1 + \alpha_2 \ar@{-}[rd] & & \alpha_2 + \alpha_3 \\
                \alpha_1 & \alpha_2 & \alpha_3 \\ }
    \end{equation*}
    where roots in the same string are joined by a line. As examples of Peterson translation, we have
    \begin{align*}
        \tau\left(\{\alpha_1, \alpha_1 + \alpha_2, \alpha_1 + \alpha_2 + \alpha_3\}, \alpha_1\right)
            & = \{\alpha_1, \alpha_2, \alpha_2+\alpha_3\}, \\
        \tau\left(\{\alpha_1, \alpha_1 + \alpha_2, \alpha_2 + \alpha_3\}, \alpha_1\right)
            & = \{\alpha_1, \alpha_2, \alpha_2+\alpha_3\}, \\
        \tau\left(\{\alpha_1, \alpha_1 + \alpha_2, \alpha_1 + \alpha_2 + \alpha_3, \alpha_2+\alpha_3\}, \alpha_1\right)
            & = \{\alpha_1, \alpha_2, \alpha_1 +\alpha_2+\alpha_3, \alpha_2+\alpha_3\}, \text{ and } \\
        \tau\left(\{\alpha_1,\alpha_2,\alpha_3\}, \alpha_1\right) &= \{\alpha_1,\alpha_2,\alpha_3\}.
    \end{align*}
\end{example}

\begin{prop}\label{P:petersonfree}
    Let $S$ be a coconvex set in $R^+$. Then:
    \begin{enumerate}[(a)]
        \item The Peterson translation $\tau(S, \alpha)$ is coconvex for every
            $\alpha \in R^+$.
        \item Suppose $\alpha \in R^+$, where $\alpha$ is either a long root,
            or is not contained in a factor of type $C$ or $F$. If $\mcA(S)$ is
            free then $\mcA(\tau(S,\alpha))$ is free with the same coexponents
            as $\mcA(S)$. 
        \item If $S$ is not a lower order ideal, then there is $\alpha \in S$
            such that $\tau(S,\alpha)$ is not equal to $S$.
    \end{enumerate}
\end{prop}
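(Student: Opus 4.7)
For part (a), my plan is to verify coconvexity of $\tau(S,\alpha)$ directly. Since Peterson translation acts on each $\alpha$-string independently, sending $S \cap s$ to the bottom $|S \cap s|$ positions of $s$, the element $\eta \in R^+$ lies in $R^+ \setminus \tau(S,\alpha)$ exactly when its position within its $\alpha$-string is at least the number of elements of $S$ in that string. I would then take $\gamma, \delta \in R^+ \setminus \tau(S,\alpha)$ with $\gamma+\delta \in R^+$ and show $\gamma+\delta \in R^+ \setminus \tau(S,\alpha)$ by cases. When $\gamma = \alpha$ or $\delta = \alpha$ the conclusion is immediate, because the three roots lie in a common $\alpha$-string and positions shift by $1$. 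The remaining case is handled by a combinatorial lemma whose proof uses coconvexity of $S$ applied to the base roots $\gamma_0, \delta_0, (\gamma+\delta)_0$ of the three (not necessarily distinct) $\alpha$-strings; once the relevant numerical inequality between the three string counts and positions is established, coconvexity of $\tau(S,\alpha)$ follows.

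For part (b), the strategy is to decompose Peterson translation into a sequence of elementary \emph{down-moves}, each replacing a single root $\beta+i\alpha \in S$ by $\beta+(i-1)\alpha \not\in S$ inside one $\alpha$-string. Part (a), applied at every step, guarantees that every intermediate set is coconvex, so the associated arrangements are honest subarrangements of $\mcA(R^+)$. To check that a single down-move preserves freeness and the multiset of coexponents, I would invoke the theorem of Abe and Yoshinaga \cite{AY13}, whose hyperplane-exchange hypotheses are clean to verify when $\alpha$ is long or lies outside type $C$ or $F$: in those cases every $\alpha$-string has length at most $2$, so a down-move is a swap between two roots $\beta+\alpha$ and $\beta$ with no other string-mates to interfere, and the restriction data needed by AY13 matches automatically. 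Iterating then yields the full statement.

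For part (c), assume $S$ is coconvex but not a lower order ideal, and let $T = \{\eta \in R^+ \setminus S : \eta \preceq \beta \text{ for some } \beta \in S\}$, which is nonempty by hypothesis. Choose $\eta \in T$ maximal in dominance order, and take a chain of simple-root covering relations from $\eta$ up to a witnessing $\beta$. The first step of this chain must already land inside $S$---otherwise that first step would be a strictly larger element of $T$, contradicting maximality of $\eta$---so $\eta + \alpha_j \in S$ for some simple root $\alpha_j$ with $\eta + \alpha_j \in R^+$. Coconvexity of $S$ now forces $\alpha_j \in S$: if $\alpha_j \notin S$ then $\eta, \alpha_j \in R^+ \setminus S$ and convexity of the complement would imply $\eta + \alpha_j \notin S$. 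In the $\alpha_j$-string through $\eta$, the set $S$ contains $\eta + \alpha_j$ but misses $\eta$, so $S$ does not restrict to a bottom segment of this string; hence $\tau(S, \alpha_j) \neq S$ with $\alpha_j \in S$ the desired element.

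The main obstacle is part (b): the careful reduction to single down-moves and the precise verification of the Abe--Yoshinaga hypotheses along the sequence, together with the need to exclude short roots in types $C$ and $F$, where longer $\alpha$-strings destroy the simple-swap picture and require the Peterson-freeness machinery developed in Section \ref{S:ptgraph}.
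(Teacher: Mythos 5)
Your part (c) is correct and essentially the paper's own argument (the paper gets there more directly: pick $\beta\in S$ and $\gamma\in R^+$ with $\beta-\gamma\in R^+\setminus S$; coconvexity forces $\gamma\in S$, and translating by $\gamma$ pulls some $\beta-c\gamma\notin S$ into the set). Parts (a) and (b), however, both have genuine gaps. For (a), the entire difficulty is concentrated in the ``remaining case'' that you delegate to an unstated combinatorial lemma: when neither summand is $\alpha$, the three roots lie in up to three different $\alpha$-strings, and the base of the string containing $\gamma+\delta$ is in general \emph{not} $\gamma_0+\delta_0$ (which need not even be a root), so the ``relevant numerical inequality'' is far from routine, especially for strings of length $3$ or $4$ in types $B$, $C$, $F$, $G$. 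The paper avoids this entirely by observing that Peterson translation is local --- $\tau(S,\alpha)_U=\tau(S_U,\alpha)$ whenever $\alpha\in U$ --- reducing any failure of coconvexity to the rank-$\le 3$ subsystem spanned by $\alpha$, $\gamma$, $\delta$, and then checking the rank-$\le 3$ cases by computer. Without either that reduction or a written-out proof of your lemma, (a) is not established.

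Part (b) has more serious problems. First, your claim that the hypothesis (``$\alpha$ long, or not in a factor of type $C$ or $F$'') forces all $\alpha$-strings to have length at most $2$ is false: a short root in $B_n$, $n\ge 3$, has strings of length $3$, and a short root in $G_2$ has strings of length $4$, yet both are permitted by the hypothesis. Second, the intermediate sets in your down-move decomposition are not Peterson translates of coconvex sets, so part (a) says nothing about them; since freeness genuinely can fail under translation of non-coconvex sets (the paper exhibits such an example in $A_3$), you cannot simply iterate through them. Third, and most importantly, the Abe--Yoshinaga theorem is not an automatic exchange principle: after matching the Ziegler multi-restrictions (which a single application of Lemma \ref{L:petersonrestrict} already does for the full translation, making the decomposition unnecessary), its converse direction still requires verifying that $\mcA(\tau(S,\alpha))_X$ is free for every corank-$3$ flat $X\subseteq\ker\alpha$. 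That verification is the real content of the proof: the paper localizes to the $3$-dimensional subspace $U$ of forms vanishing on $X$, notes that $\alpha\in U$ stays long if it was long, and invokes a computer-checked rank-$\le 3$ base case of (b) itself --- which is precisely where translation by short roots of $C_3$ fails and the type restriction enters. Your proposal supplies neither this localization step nor any base case.
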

The proof of Proposition \ref{P:petersonfree} is given later in this section.
If $S$ is coconvex and $\alpha \not\in S$ then 
$\tau(S, \alpha) = S$, so we focus on translations by roots $\alpha \in S$. 
\begin{example}
    If $S$ is not coconvex then part (b) of Proposition \ref{P:petersonfree} does not hold,
    even if $\alpha \in S$.  Let $S_0 = \{\alpha_2, \alpha_3, \alpha_1 +
    \alpha_2, \alpha_1+\alpha_2+\alpha_3\} \subseteq A_3$. Then
    \begin{equation*}
        S_1 := \tau(S_0,\alpha_2) = \{\alpha_1, \alpha_2,\alpha_3,\alpha_1+\alpha_2+\alpha_3\},
    \end{equation*}
    so
    \begin{equation*}
         G(S_0) = \vcenter{\xymatrix{ 1 \ar@{-}[r] \ar@{-}[rd] & 4 \ar@{-}[d] \\
                            2 \ar@{-}[r] & 3 \\}} 
        \ \text{ while }\ 
        G(S_1) =  \vcenter{\xymatrix{ 1 \ar@{-}[r] \ar@{-}[d] & 4 \ar@{-}[d] \\
                            2 \ar@{-}[r] & 3 \\}}.
    \end{equation*}
    Thus $\mcA(S_0)$ is free, but $\mcA(S_1)$ is not free.
\end{example}
To prove Proposition \ref{P:petersonfree}, we need some facts about multi-restriction
of arrangements. Given an arrangement $\mcA$ in $V^*$, the restriction $\mcA^H$
to a hyperplane $H \in \mcA$ is the arrangement $\{K \cap H : K \in \mcA
\setminus \mcA_H\}$ in $H$.  A multi-arrangement is an arrangement $\mcA$,
along with a multiplicity function $m : \mcA \arr \Z_{\geq 1}$ which assigns a
multiplicity $m(H)$ to every hyperplane $H \in \mcA$. Equivalently, $(\mcA, m)$
can be regarded as the (non-reduced) scheme cut out by the polynomial 
\begin{equation*}
    Q= \prod_{H \in \mcA} \alpha_H^{m(H)},
\end{equation*}
where $\alpha_H$ is a defining form for $H \in \mcA$.  Given an arrangement
$\mcA$, we let $\widetilde{\mcA}$ denote the multi-arrangement where every
hyperplane in $\mcA$ has multiplicity one. The Zeigler multi-restriction
$\widetilde{\mcA}^H$ is the multi-arrangement with underlying arrangement
$\mcA^H$, and multiplicity function $m(K) = |\{K' \in \mcA : K' \cap H = K\}|$.

Let $S^* V$ denote the space of polynomials on $V^*$. The module of derivations
$\Der(\mcA,m)$ of the multi-arrangement is the space of derivations of $S^* V$
which preserve the defining ideal generated by the polynomial $Q$. For an
ordinary arrangement, the module of derivations $\Der(\mcA)$ is just
$\Der(\widetilde{\mcA})$. A multi-arrangement $(\mcA,m)$ is said to be free if
$\Der(\mcA,m)$ is a free $S^* V$-module.  Zeigler showed that if $\mcA$ is
free, then $\widetilde{A}^{H}$ is free for any $H \in \mcA$ \cite{Zi89}. Abe
and Yoshinaga prove the following converse to Zeigler's theorem:
\begin{thm}[\cite{AY13}, Theorem 4.1]\label{T:freetranslate}
    Let $\mcA$ be an arrangement such that $\widetilde{\mcA}^H$ is free for
    some $H \in \mcA$. Then $\mcA$ is free if and only if $\mcA_X$ is free for
    every flat $X \subseteq H$ of corank $3$.
\end{thm}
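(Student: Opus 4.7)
The plan is to derive this as a refinement of Yoshinaga's freeness criterion, which characterizes freeness of $\mcA$ (for $l \geq 4$, say) by the combination of (i) freeness of $\widetilde{\mcA}^H$ with some multi-exponents $d_1, \ldots, d_{l-1}$, and (ii) the factorization $\chi(\mcA, t) = (t-1)\prod_i (t - d_i)$ of the characteristic polynomial. The hypothesis of the theorem furnishes condition (i), so the entire problem reduces to verifying the characteristic polynomial factorization using only the corank-$3$ local freeness data.

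To establish (ii), I would expand $\chi(\mcA, t)$ via M\"obius inversion as $\sum_{X \in L(\mcA)} \mu(\hat{0}, X)\, t^{\dim X}$ and split the sum by whether $X \subseteq H$. Flats $X \not\subseteq H$ are in bijection with flats of the ordinary restriction $\mcA^H$, so a comparison with $\chi(\widetilde{\mcA}^H, t)$, which factors by Ziegler's theorem together with the hypothesized freeness of $\widetilde{\mcA}^H$, controls their joint contribution and matches the target factorization up to an error term supported on flats $X \subseteq H$. Grouping those flats by the smallest flat $Y \subseteq H$ of $\mcA$ that contains them and reorganizing using the local M\"obius identity, the residual discrepancy can be written as an alternating sum of local characteristic polynomials $\chi(\mcA_Y, t)$ indexed by flats $Y \subseteq H$ of corank $\geq 3$. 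The contribution at each $Y$ vanishes as soon as $\mcA_Y$ is free with the exponents determined by the multi-restriction data.

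For corank-$3$ flats $Y$, freeness of $\mcA_Y$ is exactly the hypothesis, and the matching of exponents follows since the local multi-restriction $\widetilde{(\mcA_Y)}^{H}$ is a localization of $\widetilde{\mcA}^H$. For higher-corank flats I would proceed by induction on $\codim Y$: the hypothesis transfers cleanly to $\mcA_Y$ because $\widetilde{(\mcA_Y)}^{H \cap Y}$ is a localization of $\widetilde{\mcA}^H$ and therefore free, while every corank-$3$ flat of $\mcA_Y$ contained in $H \cap Y$ is also a corank-$3$ flat of $\mcA$ contained in $H$. Applying the theorem inductively gives freeness of $\mcA_Y$ and hence the required local cancellation.

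The main obstacle I anticipate is the precise combinatorial bookkeeping of the M\"obius sum on flats in $H$ and its regrouping into local characteristic polynomials, in particular verifying that the coefficients really do assemble into the alternating sum sketched above rather than some more complicated combination. A cleaner alternative would bypass the characteristic polynomial entirely and use Saito's criterion directly, lifting a free basis of $\Der(\widetilde{\mcA}^H)$ to derivations on $V^*$ and adjusting the lift at corank-$3$ flats where $\mcA_X$ is free; but the lifting problem is likely to concentrate exactly the same difficulties, so I would pursue the characteristic polynomial route first.
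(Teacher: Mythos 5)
First, note that the paper does not prove this statement at all: it is quoted verbatim from Abe--Yoshinaga \cite{AY13}, so your proposal can only be measured against the known proof of that theorem. Against that standard, the central step of your plan does not work. You state Yoshinaga's criterion as ``$\widetilde{\mcA}^H$ free \emph{plus} the full factorization $\chi(\mcA,t)=(t-1)\prod_i(t-d_i)$,'' and then propose to deduce the full factorization from corank-$3$ local freeness along $H$. But the coefficient of $t^{\ell-k}$ in $\chi(\mcA,t)$ is a M\"obius sum over corank-$k$ flats, so for $k\geq 3$ it is not controlled by corank-$3$ data; in particular the top coefficient involves $\mu(\hat0,\hat1)$ at the center, whose ``localization'' is $\mcA$ itself, so your proposed alternating sum over flats $Y\subseteq H$ is circular exactly at the coefficients you most need. (A smaller but real error: $L(\mcA^H)$ consists of the flats of $\mcA$ \emph{contained in} $H$, not the flats $X\not\subseteq H$, so the bookkeeping you outline starts from the wrong bijection.) The actual Abe--Yoshinaga argument never touches the higher coefficients: their form of Yoshinaga's criterion requires only that the \emph{second} coefficient $b_2$ of $\chi(\mcA,t)/(t-1)$ agree with $b_2$ of the free multirestriction (equivalently, for $\ell\geq 4$, that $\mcA$ be locally free along $H$), and their new contribution is a local--global formula expressing the $b_2$-discrepancy as a sum of nonnegative local discrepancies over corank-$3$ flats in $H$, each vanishing precisely when the corresponding localization is free by Yoshinaga's rank-$3$ criterion. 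Your stronger ``full factorization'' criterion is true but unusable here, since the factorization cannot be verified without already knowing freeness.

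The one genuinely correct and useful idea in your sketch is the closing induction: freeness of the Ziegler multirestriction localizes (localizations of free multiarrangements are free), and a corank-$3$ flat of $\mcA_Y$ contained in $H$ is a corank-$3$ flat of $\mcA$ contained in $H$. If you pair that induction with the \emph{local-freeness} form of Yoshinaga's criterion (for $\ell\geq 4$: $\mcA$ is free iff $\widetilde{\mcA}^H$ is free and $\mcA_X$ is free for every flat $X\subseteq H$ other than the center), you get a clean proof by induction on rank, with all of the analytic content concentrated in the rank-$3$ base case --- which is Yoshinaga's three-dimensional theorem and which your sketch does not engage with. As written, however, the M\"obius-inversion program that carries the weight of your argument has a gap that cannot be repaired in the form you describe.
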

If $v_i$ is a basis for $V$, then the Euler derivation $\sum v_i
\frac{\partial}{\partial v_i}$ always lies in $\Der(\mcA)$, and hence if $\mcA$
is free, then $1$ appears as an coexponent with positive multiplicity. If we
write the coexponents of $\mcA$ as $1,m_2,\ldots,m_l$, then Ziegler showed that
the coexponents of $\widetilde{\mcA}^H$ are $m_2,\ldots,m_l$. This leads to:
\begin{cor}[\cite{AY13}, Corollary 4.3]\label{C:freetranslate}
    Suppose $\mcA_1$ and $\mcA_2$ are hyperplane arrangements, and let $H_1$
    and $H_2$ be hyperplanes in $\mcA_1$ and $\mcA_2$ respectively. Suppose
    $\widetilde{\mcA_1}^{H_1} \iso \widetilde{\mcA_2}^{H_2}$, and that $\mcA_1$ is
    free. Then the following are equivalent:
    \begin{enumerate}[(a)]
        \item $\mcA_2$ is free, with the same coexponents as $\mcA_1$.
        \item $(\mcA_2)_X$ is free for every flat $X \subseteq H_2$ of corank $3$.
    \end{enumerate}
\end{cor}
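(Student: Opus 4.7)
The plan is to derive Corollary \ref{C:freetranslate} directly from Theorem \ref{T:freetranslate}, Ziegler's theorem, and the coexponent formula for Ziegler multi-restrictions recalled in the paragraph preceding the corollary.

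For the direction (a) $\Rightarrow$ (b): if $\mcA_2$ is free, then $(\mcA_2)_X$ is free for every $X \in L(\mcA_2)$ by the standard localization result \cite[Theorem 4.37]{OT92} cited in Section \ref{S:pattern}. In particular this applies to every corank-$3$ flat $X \subseteq H_2$, so the coexponent hypothesis plays no role here.

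For the more substantive direction (b) $\Rightarrow$ (a): since $\mcA_1$ is assumed free, Ziegler's theorem gives that $\widetilde{\mcA_1}^{H_1}$ is free. The hypothesis $\widetilde{\mcA_1}^{H_1} \iso \widetilde{\mcA_2}^{H_2}$ then transports freeness across the isomorphism, so $\widetilde{\mcA_2}^{H_2}$ is free as well. Now $\mcA_2$ meets the hypotheses of Theorem \ref{T:freetranslate} at the hyperplane $H_2$: the Ziegler multi-restriction is free, and by (b) every corank-$3$ localization $(\mcA_2)_X$ with $X \subseteq H_2$ is free. The theorem therefore yields freeness of $\mcA_2$.

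To match the coexponents, write those of $\mcA_1$ as $1, m_2, \ldots, m_l$, where the $1$ comes from the Euler derivation and $l$ is the common ambient dimension of the two arrangements (forced by $\widetilde{\mcA_1}^{H_1} \iso \widetilde{\mcA_2}^{H_2}$, since $\dim H_i$ determines $\dim V_i$). Ziegler's coexponent rule gives that $\widetilde{\mcA_1}^{H_1}$ has coexponents $m_2, \ldots, m_l$, hence so does $\widetilde{\mcA_2}^{H_2}$. Applying the same rule in reverse to the now-free arrangement $\mcA_2$ pins down its coexponents as $1, m_2, \ldots, m_l$, agreeing with those of $\mcA_1$.

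The main obstacle is entirely absorbed into Theorem \ref{T:freetranslate}, which supplies the nontrivial converse to Ziegler's freeness statement; once that is invoked, the corollary is essentially a bookkeeping exercise. The only conceptual point to be careful about is that Ziegler restriction determines all but one of the coexponents, so knowing both the coexponent multiset of $\widetilde{\mcA_2}^{H_2}$ and the freeness of $\mcA_2$ is enough to recover the full coexponent list.
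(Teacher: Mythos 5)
Your proposal is correct and is exactly the derivation the paper intends: the paper states this result as a citation to Abe--Yoshinaga, but the preceding paragraph (Ziegler's theorem that freeness of $\mcA$ implies freeness of $\widetilde{\mcA}^H$, plus his coexponent rule $1,m_2,\ldots,m_l \mapsto m_2,\ldots,m_l$) together with Theorem \ref{T:freetranslate} is precisely the bookkeeping you carry out. The one point worth being explicit about -- that an isomorphism of free multi-arrangements preserves the coexponent multiset, so the Ziegler rule can be read backwards to recover the coexponents of $\mcA_2$ -- is handled correctly in your last paragraph.
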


The key lemma we need to prove Proposition \ref{P:petersonfree} is the following:
\begin{lemma}\label{L:petersonrestrict}
    Let $S_0 \subseteq R^+$, and $H = \ker \alpha$ for some $\alpha \in S$. 
    Let $S_1 = \tau(S_0,\alpha)$.
    Then
    \begin{equation*}
        \widetilde{\mcA(S_0)}^H \iso \widetilde{\mcA(S_1)}^{H}.
    \end{equation*}
\end{lemma}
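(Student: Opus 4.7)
The plan is to identify both sides of the claimed isomorphism with the same combinatorial data, namely a multiplicity function on equivalence classes of roots. First I would check that for $\beta \in R^+$, the hyperplane $\ker \beta$ contains $H = \ker \alpha$ exactly when $\beta = \alpha$, so the multi-restriction $\widetilde{\mcA(S)}^H$ is controlled entirely by the roots of $S \setminus \{\alpha\}$. Two such roots $\beta_1, \beta_2$ contribute the same restricted hyperplane in $H$ precisely when $\{\beta_1,\beta_2,\alpha\}$ is linearly dependent; call this the $H$-equivalence on $R^+ \setminus \{\alpha\}$. Then $\widetilde{\mcA(S)}^H$ is intrinsically described as the multi-arrangement whose underlying hyperplanes are indexed by the $H$-equivalence classes $C$ with $S \cap C \neq \emptyset$, each carrying multiplicity $|S \cap C|$.

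The heart of the argument is the observation that every $\alpha$-string $\{\beta,\beta+\alpha,\ldots,\beta+k\alpha\}$ with $\beta \not\in \R\alpha$ lies inside a single $H$-equivalence class, since its elements are all contained in the $2$-plane spanned by $\beta$ and $\alpha$. By definition, Peterson translation $\tau(\cdot,\alpha)$ acts string-by-string, replacing an $r$-element subset of a string by the size-$r$ ``bottom'' of that same string. So for every $\alpha$-string $T$ we have $|S_1 \cap T| = |S_0 \cap T|$, with $S_1 \cap T$ sitting in the same $H$-class as $S_0 \cap T$. Summing over the $\alpha$-strings contained in a given $H$-class $C$ yields $|S_0 \cap C| = |S_1 \cap C|$ for every $C$, which is exactly the equality of multi-arrangements we want.

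The only point that requires care is that distinct $\alpha$-strings can live in the same $H$-equivalence class; this already occurs in $B_2$ with $\alpha$ a long simple root, where the singleton string $\{\alpha + 2\gamma\}$ and the length-two string $\{\gamma,\gamma+\alpha\}$ are all $H$-equivalent (here $\gamma$ denotes the short simple root). This is what one must watch out for, but it is not a genuine obstacle to the strategy, because $\tau$ is already defined string-by-string and therefore automatically respects the finer partition into strings --- and \emph{a fortiori} the coarser partition into $H$-classes. Once the translation between restricted hyperplanes and $H$-equivalence classes is in place, the lemma reduces to this class-wise counting identity, which is immediate from the definition of $\tau$.
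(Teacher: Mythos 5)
Your proof is correct and takes essentially the same approach as the paper, whose entire proof is the one-line observation that the elements of $S_1$ are $\alpha$-translates of those of $S_0$ occurring with the correct multiplicities. You have simply filled in the details --- identifying restricted hyperplanes with $H$-equivalence classes, noting each $\alpha$-string lies in a single class, and counting class by class --- including the correct observation that several strings may share a class without causing trouble.
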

\begin{proof}
    The elements of $S_1$ are $\alpha$-translates of the elements of
    $S_0$, occurring with the correct multiplicities.
\end{proof}

\begin{proof}[Proof of Proposition \ref{P:petersonfree}]
    To start, we observe that parts (a) and (b) of Proposition \ref{P:petersonfree}
    hold for all root systems of rank $\leq 3$. Indeed, it is enough to check
    $R = A_3$, $B_3$, $C_3$, and $G_2$, which we can do on a computer, using the
    methodology described in Section \ref{S:proofs} to check freeness. 

    Now consider a general root system $R$, and let $S$ be a coconvex subset of
    $R^+$. Recall that if $U$ is a subspace of $V$, then $S_U = S \cap U$. The
    key idea of the proof is that Peterson translation is local, in the sense
    that if $\alpha \in U$ then $\tau(S, \alpha)_U = \tau(S_U, \alpha)$.
    Suppose $\beta, \gamma \in R^+ \setminus \tau(S,\alpha)$ such that $\beta +
    \gamma \in R^+$.  Let $U$ be the subspace spanned by $\alpha$, $\beta$, and
    $\gamma$.  Then $S_U$ is a coconvex subset of $R^+_U$, where $R_U$ is a
    root system of rank $\leq 3$, so $\tau(S_U, \alpha) = \tau(S, \alpha)_U$ is
    coconvex.  Hence $\beta + \gamma \not\in \tau(S,\alpha)_U$. It follows that
    $\tau(S,\alpha)$ is coconvex. 

    Similarly suppose that $\mcA(S)$ is free, and let $X$ be a flat of
    $\mcA(\tau(S,\alpha))$ of corank $3$ contained in $H = \ker \alpha$. Let $U
    \subseteq V$ be the subspace of linear forms that vanish on $X$. Then $U$ has
    dimension $3$, and $\alpha \in U$. Furthermore, if $\alpha$ is long in 
    $R$, then $\alpha$ is long in $R_U$. Since $\mcA(S)$ is free, $\mcA(S_U)$ is
    also free, and thus $\mcA(\tau(S_U, \alpha)) = \mcA(\tau(S,\alpha)_U) = 
    \mcA(\tau(S,\alpha))_X$ is free. Applying Lemma \ref{L:petersonrestrict}
    and Corollary \ref{C:freetranslate}, part (b) we get that
    $\mcA(\tau(S,\alpha))$ is free with the same coexponents as $\mcA(S)$. 

    Finally, suppose that $S$ is a coconvex set but is not a lower order ideal.
    This means that there is $\beta \in S$ and $\gamma \in R^+$ such that
    $\beta - \gamma \not\in S$. Since $S$ is coconvex, $\gamma$ must be in $S$,
    and $\tau(S,\gamma)$ will include some element $\beta - c \gamma \in R^+$,
    $c>0$, which is not in $S$.
\end{proof}

If $\tau(S,\alpha) \neq S$, then the sum of the heights of roots in
$\tau(S,\alpha)$ is strictly less than the sum of the heights of roots in $S$.
Given a coconvex set $S_0$ which is not an order ideal, Proposition
\ref{P:petersonfree} implies that we can translate by $\alpha_0 \in S_0$ to get
a different coconvex set $S_1 = \tau(S_0, \alpha_0)$. Repeating this procedure,
we must eventually arrive at a lower order ideal, leading to the following
corollary of Theorem \ref{T:idealfree} and Proposition \ref{P:petersonfree}.
\begin{cor}\label{C:petersonfree}
    If $S$ is a coconvex set, then there is a sequence $S = S_0, \ldots, S_r$
    of distinct coconvex sets, such that $S_{i+1} = \tau(S_i,\alpha_i)$ for
    some $\alpha_i \in S_i$, and $S_r$ is a lower order ideal. If $\mcA(S)$ is
    free, and $R$ has no factors of type $C$ or $F$, then $\mcA(S)$ has
    coexponents $\Exp(S_r)$.
\end{cor}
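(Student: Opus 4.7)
The plan is to construct the sequence greedily, using the height-sum observation made just before the statement as the termination argument, and then to propagate freeness and coexponents along the sequence via Proposition \ref{P:petersonfree}(b).

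Set $S_0 = S$. Given $S_i$, if $S_i$ is a lower order ideal then stop and put $r = i$. Otherwise, apply Proposition \ref{P:petersonfree}(c) to obtain some $\alpha_i \in S_i$ such that $\tau(S_i, \alpha_i) \neq S_i$, and set $S_{i+1} = \tau(S_i, \alpha_i)$. By Proposition \ref{P:petersonfree}(a), $S_{i+1}$ is again coconvex, so the construction can be iterated as long as we have not yet reached an ideal. To see that the process terminates, define $h(T) = \sum_{\beta \in T} \mathrm{ht}(\beta)$ for $T \subseteq R^+$. By the definition of $\tau$, compressing each $\alpha_i$-string replaces a subset $\{\beta + i_1 \alpha_i, \ldots, \beta + i_k \alpha_i\}$ by the initial segment $\{\beta, \ldots, \beta + (k-1)\alpha_i\}$, which has strictly smaller height-sum whenever the string is altered at all. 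Hence $h(S_{i+1}) < h(S_i)$ whenever $S_{i+1} \neq S_i$, and since $h$ is a non-negative integer, the sequence must terminate in some lower order ideal $S_r$. In particular the $S_i$ are automatically distinct.

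For the second assertion, assume that $R$ has no factors of type $C$ or $F$ and that $\mcA(S)$ is free. Under this hypothesis, the condition on $\alpha$ in Proposition \ref{P:petersonfree}(b) holds for every positive root, so applying part (b) inductively along the chain $S_0, S_1, \ldots, S_r$ shows that each $\mcA(S_i)$ is free with the same coexponents as $\mcA(S_0) = \mcA(S)$. Since $S_r$ is a lower order ideal, Theorem \ref{T:idealfree} identifies its coexponents with $\Exp(S_r)$, and the conclusion follows.

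There is no real obstacle: the main content has already been packaged into Proposition \ref{P:petersonfree}, so this corollary is essentially a bookkeeping argument combining the three parts of that proposition with Theorem \ref{T:idealfree}. The only point that requires a moment's care is the termination, which is why the height-sum monovariant is singled out explicitly.
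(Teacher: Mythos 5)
Your proposal is correct and follows essentially the same route as the paper, which derives the corollary from the height-sum monovariant stated immediately before it together with parts (a), (b), (c) of Proposition \ref{P:petersonfree} and Theorem \ref{T:idealfree}. The paper leaves this as an immediate consequence without a displayed proof, and your write-up just makes the same bookkeeping explicit.
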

In the next section we will show that we can remove the type restriction in
Corollary \ref{C:petersonfree} if $S$ is an inversion set. 

\begin{example}\label{Ex:typeAreflection}
    Using the notation of Example \ref{Ex:typeA}, let $w = s_1 s_2 s_3 s_2 s_1$
    in $W(A_3)$. Then 
    \begin{equation*}
        I(w) = \{\alpha_1, \alpha_3, \alpha_1+\alpha_2, \alpha_2+\alpha_3, 
            \alpha_1 + \alpha_2 + \alpha_3\} = R^+ \setminus \{\alpha_2\}.
    \end{equation*}
    Since $w \neq s_2 s_1 s_3 s_2$, $\mcA(I(w))$ is free (alternatively, the
    associated graph is the complete graph minus an edge). We have
    \begin{align*}
        \tau\left(I(w), \alpha_1\right) & = \{\alpha_1, \alpha_2, \alpha_3, 
            \alpha_2 + \alpha_3, \alpha_1 + \alpha_2 + \alpha_3\} =: S_1, \text{ and } \\
        \tau\left(S_1, \alpha_3\right) & = \{\alpha_1, \alpha_2, \alpha_3, \alpha_1+\alpha_2,
            \alpha_2+\alpha_3\} =: S_2.
    \end{align*}
    The set $S_2$ is an order ideal, and $\Exp(S_2) = \{1, 2, 2\}$, so
    $\mcA(I(w))$ has coexponents $\{1,2,2\}$. 
\end{example}

\begin{example}\label{Ex:matroidiso}
    Although Peterson translation preserves freeness in type $A$, it is not a
    matroid invariant. Let
    \begin{equation*}
        S_0 = \{\alpha_2, \alpha_3, \alpha_4, \alpha_1+\alpha_2, \alpha_2+\alpha_3,
            \alpha_3+\alpha_4, \alpha_1+\alpha_2+\alpha_3\} \subseteq A_4^+.
    \end{equation*}
    The set $S_0$ is coconvex, and the graph of $S_0$ is
    \begin{equation*}
        G(S_0) = \vcenter{ \xymatrix{ & & 4 \ar@{-}[dd] \ar@{-}[rd] & \\
                                    1 \ar@{-}[rru] \ar@{-}[rrd] & 2 \ar@{-}[ru] \ar@{-}[rd] & & 5 \\
                                    & & 3 \ar@{-}[ru] & }},
    \end{equation*}
    so $\mcA(S_0)$ is free. Now
    \begin{equation*}
        S_1 := \tau(S_0, \alpha_2) = \{\alpha_1, \alpha_2, \alpha_3, \alpha_4, \alpha_2+\alpha_3, \alpha_3
                + \alpha_4, \alpha_1+\alpha_2+\alpha_3\},
    \end{equation*}
    and
    \begin{equation*}
        G(S_1) = \vcenter{ \xymatrix{ & 2 \ar@{-}[r] \ar@{-}[d] & 3 \ar@{-}[d] \\
                                    1 \ar@{-}[ru] \ar@{-}[r] & 4 \ar@{-}[ru] \ar@{-}[r] & 5 }}.
    \end{equation*} 
    The vector matroids of $S_0$ and $S_1$ are the cycle matroids of $G(S_0)$
    and $G(S_1)$ respectively. The graph $G(S_1)$ has a simple $5$-cycle, while
    $G(S_0)$ does not, so the cycle matroids cannot be isomorphic. However, $\mcA(S_1)$ is still free.
\end{example}

\section{Peterson-freeness and the Peterson translation graph}\label{S:ptgraph}

In this section we further analyze Peterson translation in types $C$ and $F$. We
start by recalling the definition of Peterson-freeness from the introduction.
\begin{defn}
    We say that a coconvex set $S$ is \emph{Peterson-free} if $\mcA(S)$ is free, and
    $\mcA(\tau(S,\alpha))$ is free for any $\alpha \in S$.
\end{defn}
As in the previous section, if $S$ is Peterson-free then $\mcA(\tau(S,\alpha))$
automatically has the same exponents as $\mcA(S)$ for all $\alpha \in R^+$. 

\begin{prop}\label{P:petersonfree2}
    Let $S$ be a coconvex set such that $\mcA(S)$ is free. Then the
    following are equivalent:
    \begin{enumerate}[(a)]
        \item $S$ is Peterson-free.
        \item $S_U$ is Peterson-free in $R_U$ for any subspace $U \subseteq V$.
        \item $S_U$ is Peterson-free for every subspace $U\subseteq V$ of dimension $3$. 
    \end{enumerate}
\end{prop}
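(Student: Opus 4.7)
The plan is to prove the cycle (b) $\Rightarrow$ (c) $\Rightarrow$ (a) $\Rightarrow$ (b). The main tools are the locality of Peterson translation (the identity $\tau(S,\alpha)_U = \tau(S_U,\alpha)$ whenever $\alpha \in U$, already used in the proof of Proposition \ref{P:petersonfree}) together with the correspondence recorded after Theorem \ref{T:coconvex}: the localization $\mcA(S)_X$ is free if and only if $\mcA(S_U)$ is free, where $U$ is the subspace of $V$ spanned by the linear forms vanishing on $X$.

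The implication (b) $\Rightarrow$ (c) is immediate by specializing to three-dimensional subspaces. For (a) $\Rightarrow$ (b), I would fix a subspace $U \subseteq V$, observe that $S_U$ is coconvex in $R_U^+$ (a direct check using the coconvexity of $S$), and note that $\mcA(S_U)$ is free as a localization of the free arrangement $\mcA(S)$. For any $\alpha \in S_U \subseteq U$, locality yields $\tau(S_U,\alpha) = \tau(S,\alpha)_U$, so $\mcA(\tau(S_U,\alpha))$ is again the appropriate localization of the free arrangement $\mcA(\tau(S,\alpha))$ and is therefore free. Hence $S_U$ is Peterson-free in $R_U$.

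The substantive step is (c) $\Rightarrow$ (a), and it is where Corollary \ref{C:freetranslate} (Abe-Yoshinaga) enters. Given $\alpha \in S$, I would set $H = \ker \alpha$ and apply Corollary \ref{C:freetranslate} with $\mcA_1 = \mcA(S)$, $\mcA_2 = \mcA(\tau(S,\alpha))$, and $H_1 = H_2 = H$; the required isomorphism of multi-restrictions is supplied by Lemma \ref{L:petersonrestrict}. This reduces freeness of $\mcA(\tau(S,\alpha))$ to freeness of $\mcA(\tau(S,\alpha))_X$ for every flat $X \subseteq H$ of corank three. For such an $X$, the span $U$ of the linear forms vanishing on $X$ is three-dimensional and contains $\alpha$ (because $X \subseteq \ker\alpha$), so $\alpha \in S_U$ and locality gives $\tau(S,\alpha)_U = \tau(S_U,\alpha)$. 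The arrangement $\mcA(\tau(S_U,\alpha))$ is free by the assumed Peterson-freeness of $S_U$ in $R_U$, and hence so is $\mcA(\tau(S,\alpha))_X$.

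I expect the only real obstacle to be conceptual rather than computational: the leap from three-dimensional Peterson-freeness to global Peterson-freeness is entirely driven by the Abe-Yoshinaga converse to Ziegler multi-restriction, without which there would be no direct route from corank-three data to freeness of the translate in the ambient root system. The other implications are essentially bookkeeping with the locality of $\tau$ and standard localization arguments.
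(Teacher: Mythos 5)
Your proposal is correct and follows essentially the same route as the paper's proof: the same cycle of implications, the same locality identity $\tau(S,\alpha)_U = \tau(S_U,\alpha)$ combined with the localization correspondence for (a)$\Leftrightarrow$(b), and the same use of Lemma \ref{L:petersonrestrict} with Corollary \ref{C:freetranslate} to reduce (c)$\Rightarrow$(a) to corank-three flats inside $\ker\alpha$. The only difference is that the paper phrases the nontrivial implications contrapositively, which is purely cosmetic.
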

\begin{proof}
    To show that (a) implies (b), suppose that $S_U$ is not Peterson-free for
    some subspace $U$. Then there is $\alpha \in S_U$ such that
    $\mcA(\tau(S_U,\alpha)) = \mcA(\tau(S,\alpha)_U)$ is not free. It follows
    that $\mcA(\tau(S,\alpha))$ is not free, and hence $S$ is not
    Peterson-free. 

    It is clear that (b) implies (c). To finish the proof, we show that
    (c) implies (a). Suppose $S$ is not Peterson-free. Since $\mcA(S)$ is free,
    there is $\alpha \in S$ such that $\mcA(\tau(S,\alpha))$ is not free. By
    Lemma \ref{L:petersonrestrict} and Corollary \ref{C:freetranslate}, there
    must be a flat $X$ of corank $3$ contained in $\ker \alpha$ such that
    $\mcA(\tau(S,\alpha))_X$ is not free.  If $U$ is the space of linear forms
    vanishing on $X$, then $\mcA(S_U)$ is free and $\mcA(\tau(S_U,\alpha)) =
    \mcA(\tau(S,\alpha))_X$ is not. Thus $S_U$ is not Peterson-free. 
\end{proof}
By Proposition \ref{P:petersonfree}, Peterson-freeness is equivalent to
freeness if $R$ has no factors of type $C$ or $F$. Thus in part (c) of
Proposition \ref{P:petersonfree2} it suffices to check subspaces $U$ such that
$R_U \iso C_3$. The positive roots of $C_3$, arranged in order of height, are
\begin{gather*}
    \alpha_1 + 2 \alpha_2 + 2 \alpha_3 \\
    \alpha_1 + 2 \alpha_2 + \alpha_3 \\
    \alpha_1 + 2 \alpha_2 \quad\quad \alpha_1 + \alpha_2 + \alpha_3 \\
    \alpha_1 + \alpha_2 \quad\quad\quad \alpha_2 + \alpha_3 \\
    \alpha_1 \quad\quad\quad \alpha_2 \quad\quad\quad \alpha_3
\end{gather*}
where $\alpha_1$ is a long simple root, and $\alpha_2$ and $\alpha_3$ are
short simple roots. Using a computer, we can easily
check that the only coconvex sets $S$ in $C_3$ for which $\mcA(S)$ is free
but $S$ is not Peterson-free are
\begin{align}
    S_1 & = \{ \alpha_1, \alpha_2, \alpha_3, \alpha_2 + \alpha_3, \alpha_1 + 2\alpha_2, \alpha_1 + 2\alpha_2 + 2\alpha_3\}, \nonumber \\ 
    S_2 & = \{ \alpha_1, \alpha_3, \alpha_1 + \alpha_2 , \alpha_1 + 2\alpha_2, \alpha_1 + \alpha_2 + \alpha_3, \alpha_1 + 2\alpha_2 + 2\alpha_3 \}, \text{ and } \label{E:nonpf} \\
    S_3 & = \{ \alpha_2, \alpha_3, \alpha_1 + \alpha_2, \alpha_2 + \alpha_3, \alpha_1 + \alpha_2 + \alpha_3, \alpha_1 + 2\alpha_2 + \alpha_3\}.  \nonumber
\end{align}
If we add these three patterns to the $50$ minimal non-free coconvex patterns
in type $C_3$, then we get a pattern avoidance characterization of
Peterson-freeness, rather than freeness. Also, none of these sets are biconvex,
so we immediately have the following:
\begin{cor}\label{C:ptinversion}
    If $S \subset R^+$ is an inversion set and $\mcA(S)$ is free, then
    $S$ is Peterson-free.
\end{cor}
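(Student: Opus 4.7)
The plan is to apply Proposition \ref{P:petersonfree2}, which reduces the claim to showing that the restriction $S_U = S \cap U$ is Peterson-free in $R_U$ for every $3$-dimensional subspace $U \subseteq V$.

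First I would record two hereditary properties of the inversion set $S = I(w)$. Because $S$ is biconvex, so is the intersection $S_U \subseteq R_U^+$, making $S_U$ itself an inversion set in $W(R_U)$. Moreover, since $\mcA(S)$ is free, the localization of $\mcA(S)$ at the flat dual to $U$ is free by \cite[Theorem 4.37]{OT92}, and hence the linearly isomorphic arrangement $\mcA(S_U)$ is free as well.

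Next, by Proposition \ref{P:petersonfree}(b), freeness and Peterson-freeness coincide for $\mcA(S_U)$ whenever $R_U$ has no factor of type $C$ or $F$. Since type $F$ root systems have rank at least $4$, the only remaining case is $R_U \iso C_3$. Here the enumeration \eqref{E:nonpf} lists $S_1, S_2, S_3$ as the only coconvex subsets of $C_3^+$ that are free but not Peterson-free. The last step is to verify by direct inspection that none of these three sets is biconvex: for instance, $\alpha_1, \alpha_2 \in S_1$ but $\alpha_1 + \alpha_2 \notin S_1$, so $S_1$ fails convexity, and analogous failures of convexity occur for $S_2$ and $S_3$. Since $S_U$ is both biconvex and free, it cannot coincide with any $S_i$, and must therefore be Peterson-free.

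The only genuine obstacle is the $C_3$ enumeration, but that computation is already in hand at \eqref{E:nonpf}; modulo this, the argument reduces to the short biconvexity check above, after which Proposition \ref{P:petersonfree2} delivers the conclusion for $S$ itself.
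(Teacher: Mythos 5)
Your proposal is correct and follows essentially the same route as the paper: reduce to $3$-dimensional subspaces via Proposition \ref{P:petersonfree2}(c), note that only $R_U \iso C_3$ can be problematic by Proposition \ref{P:petersonfree}(b), and observe that none of the three exceptional sets in \eqref{E:nonpf} is biconvex. The paper compresses exactly this reasoning into the sentence preceding the corollary.
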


The most important property of Peterson-freeness is the following:
\begin{prop}\label{P:petersonfree3}
    If $S \subset R^+$ is Peterson-free and $\alpha \in R^+$ then $\tau(S,
    \alpha)$ is also Peterson-free. 
\end{prop}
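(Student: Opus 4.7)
My plan is to apply Proposition \ref{P:petersonfree2} to reduce the claim to a finite computer check in low rank. By Proposition \ref{P:petersonfree2}(c), it suffices to show that $\tau(S,\alpha)_U$ is Peterson-free in $R_U$ for every $3$-dimensional subspace $U \subseteq V$. I would split on whether $\alpha \in U$.

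If $\alpha \in U$, then the locality of Peterson translation (established in the proof of Proposition \ref{P:petersonfree}) gives $\tau(S,\alpha)_U = \tau(S_U,\alpha)$. Since $S$ is Peterson-free, Proposition \ref{P:petersonfree2}(b) shows that $S_U$ is Peterson-free in $R_U$, and the claim reduces to the current proposition in rank $\leq 3$.

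If $\alpha \notin U$, let $V' = U + \R\alpha$, a $4$-dimensional subspace. Locality, applied in $R_{V'}$, gives $\tau(S,\alpha)_{V'} = \tau(S_{V'},\alpha)$, and hence $\tau(S,\alpha)_U = \tau(S_{V'},\alpha)_U$. Now $S_{V'}$ is Peterson-free in the rank-$\leq 4$ root system $R_{V'}$ by Proposition \ref{P:petersonfree2}(b), so granting the current proposition in rank $\leq 4$, $\tau(S_{V'},\alpha)$ is Peterson-free in $R_{V'}$. A second application of Proposition \ref{P:petersonfree2}(b), inside $R_{V'}$, then yields that $\tau(S_{V'},\alpha)_U = \tau(S,\alpha)_U$ is Peterson-free in $R_U$.

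These two cases reduce the proposition to the case $\rank R \leq 4$, which I would verify by the exhaustive computer search described in Section \ref{S:proofs}: enumerate the Peterson-free coconvex sets in each rank-$\leq 4$ root system (the systems $A_n,B_n,C_n,D_n$ for $n \leq 4$, together with $F_4$, $G_2$, and their reducible combinations) and check that each Peterson translate of such a set is again Peterson-free. For root systems containing no $C$ or $F$ factor, Peterson-freeness coincides with freeness and Proposition \ref{P:petersonfree}(b) already guarantees the required closure, so the substantive checks are in $C_3$, $C_4$, and $F_4$. The principal obstacle will be $F_4$: Table \ref{TBL:badpat} already indicates $391$ minimal non-free coconvex patterns there, so the collection of Peterson-free coconvex sets to track under translation is correspondingly large.
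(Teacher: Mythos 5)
Your argument is correct and follows essentially the same route as the paper's proof: both use the locality of Peterson translation together with Proposition \ref{P:petersonfree2} to pass to the subspace $U + \R\alpha$ of dimension at most $4$, and then settle the rank-$\leq 4$ cases by computer, with only types $C$ and $F$ requiring a genuine check (the paper phrases the reduction contrapositively, but the content is identical). One small point worth making explicit: to invoke Proposition \ref{P:petersonfree2}, (c) $\Rightarrow$ (a), for the set $\tau(S,\alpha)$ you need $\mcA(\tau(S,\alpha))$ to be free, which holds precisely because $S$ is assumed Peterson-free.
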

\begin{proof}
    Suppose $\mcA(S)$ is free, but $\tau(S,\alpha)$ is not Peterson-free for
    some $\alpha \in S$. We want to show that $S$ is not Peterson-free. 
    We can assume that $\mcA(\tau(S,\alpha))$ is free, since otherwise
    $S$ is not Peterson-free by definition. With this assumption, Proposition
    \ref{P:petersonfree2} implies that there is a subspace $U \subseteq V$
    of dimension $3$ for which $\tau(S,\alpha)_U$ is not Peterson-free.
    Let $U' = U + \R \alpha$. Then $\tau(S,\alpha)_{U'} = \tau(S_{U'}, \alpha)$
    is not Peterson-free by Proposition \ref{P:petersonfree2} again, and we
    will be done if we can show that $S_{U'}$ is not Peterson-free. Since
    $U'$ has dimension $\leq 4$, it suffices to check Proposition
    \ref{P:petersonfree} for root systems of rank $\leq 4$. Since freeness and
    Peterson-freeness are equivalent except in types $C$ and $F$, we finish the
    proof by checking on a computer that the proposition holds in $C_4$ and
    $F_4$. 
\end{proof}

It is helpful to think of Peterson translation and Peterson-freeness using
a certain directed graph:
\begin{defn}
    Let $R$ be a finite root system. The \emph{Peterson translation graph}
    of $R$ is the directed graph $\mcG \cong \mcG(R)$ with vertex set
    \begin{equation*}
        V(\mcG) = \{ S \subseteq R^+\ :\ S \text{ is coconvex} \},
    \end{equation*}
    and an edge $S_0 \arr S_1$ if $S_1 = \tau(S_0, \alpha) \neq S_0$
    for some $\alpha \in S_0$. 
\end{defn}
The graph $\mcG$ is acyclic, and by Proposition
\ref{P:petersonfree}, part (c) the terminal vertices of $\mcG$ are the lower
order ideals in $R^+$. In terms of the Peterson translation graph, Proposition
\ref{P:petersonfree3} is equivalent to:
\begin{cor}\label{C:petersonfree2}
    A coconvex set $S \subset R^+$ is Peterson-free if and only if $\mcA(T)$ is
    free for every coconvex set $T$ for which there is a directed path $S \leadsto T$ in $\mcG(R)$.
\end{cor}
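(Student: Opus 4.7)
The plan is to deduce this directly from Proposition \ref{P:petersonfree3}, which is the essential content; the corollary just repackages that proposition into the language of the graph $\mcG(R)$. Both implications reduce to unwinding the definition of Peterson-freeness together with the definition of an edge in $\mcG$, so the argument is essentially bookkeeping once Proposition \ref{P:petersonfree3} is in hand.

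For the forward direction, I would argue by induction on the length of a directed path $S = S_0 \arr S_1 \arr \cdots \arr S_n = T$ in $\mcG(R)$. The base case $n=0$ gives $T = S$, and $\mcA(S)$ is free by the definition of Peterson-freeness. For the inductive step, the first edge $S \arr S_1$ means $S_1 = \tau(S,\alpha)$ for some $\alpha \in S$ with $S_1 \neq S$; then $\mcA(S_1)$ is free because $S$ is Peterson-free, and $S_1$ is itself Peterson-free by Proposition \ref{P:petersonfree3}. Applying the inductive hypothesis to the shorter directed path $S_1 \leadsto T$ (of length $n-1$) gives that $\mcA(T)$ is free.

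For the reverse direction, suppose $\mcA(T)$ is free for every coconvex $T$ with $S \leadsto T$. The trivial path $T=S$ shows $\mcA(S)$ is free. For each $\alpha \in S$, either $\tau(S,\alpha) = S$, in which case $\mcA(\tau(S,\alpha)) = \mcA(S)$ is free, or $\tau(S,\alpha) \neq S$, in which case $S \arr \tau(S,\alpha)$ is an edge of $\mcG(R)$ and $\mcA(\tau(S,\alpha))$ is free by hypothesis. Either way, $\mcA(\tau(S,\alpha))$ is free, so $S$ is Peterson-free.

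The main obstacle would have been hiding in Proposition \ref{P:petersonfree3}, which required the detailed rank $\leq 4$ computer check in types $C_4$ and $F_4$; once that proposition is available, Corollary \ref{C:petersonfree2} has no real obstacle and is purely a reformulation. The only subtlety worth flagging is allowing the length-zero path in the reverse direction, which is needed so that freeness of $\mcA(S)$ itself is covered by the hypothesis.
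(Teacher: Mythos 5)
Your proof is correct and follows exactly the route the paper intends: the paper states Corollary \ref{C:petersonfree2} as a direct reformulation of Proposition \ref{P:petersonfree3} and leaves the bookkeeping implicit, which is precisely the induction on path length and definition-unwinding you supply. Your remark about admitting the length-zero path $S \leadsto S$ is the right convention to flag, since the reverse direction does need it to recover freeness of $\mcA(S)$ itself.
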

In particular, if $\mcA(S)$ is free and $S$ is an inversion arrangement then
we can translate $S$ to a lower order ideal $T$, and $\mcA(S)$ will have
coexponents $\Exp(T)$ regardless of type. 
\begin{rmk}
    It might be interesting to study the Peterson translation graph from a combinatorial
    perspective. For instance, it is possible to reach every lower order ideal
    by Peterson translation from an inversion set?
\end{rmk}

\section{Proof of pattern avoidance results}\label{S:proofs}

In this section, we prove the main pattern avoidance results from Section
\ref{S:pattern}. In subsection \ref{SS:computer} we also discuss our
methodology for checking the many statements left up to computer verification
in this and previous sections. We start by restricting our attention to those
edges of $\mcG(R)$ which preserve freeness locally.
\begin{defn}
    Given a root system $R$, let $\mcG^{Fr}(R)$ be the subgraph of the Peterson
    translation graph $\mcG(R)$ with the same vertex set, but only those edges $S
    \arr \tau(S, \alpha)$ for which there does not exist a subspace $\alpha \in U
    \subset V$ such that $\mcA(S_U)$ is free but $\mcA(\tau(S_U,\alpha))$ is not. 
\end{defn}
By Lemma \ref{L:petersonrestrict} and Corollary
\ref{C:freetranslate}, to test whether $S \arr
\tau(S,\alpha)$ is an edge in $\mcG^{Fr}(R)$ it
is enough to check subspaces $\alpha \in U
\subset V$ of dimension $3$.  If $R$ does not
contain any factors of types $C$ or $F$ then
$\mcG^{Fr}(R) = \mcG(R)$. 

We now can define two properties of a root system $R$ and fixed integer
$k$:
\begin{equation*}
    (L_k) \quad \parbox{5in}{
            If $S \subseteq R^+$ is coconvex, and $\mcA(S)_X$ is free for all
            flats $X$ of corank $\leq k$, then $\mcA(S)$ is free.}
\end{equation*}
\begin{equation*}
    (T_k) \quad \parbox{5in}{
            If $S$ is a terminal vertex of $\mcG^{Fr}(R)$, and $\mcA(S)_X$ is free
            for all flats $X$ of corank $\leq k$, then $\mcA(S)$ is free.}
\end{equation*}
The main result of this section is a criterion for $(L_k)$ to hold for a class
of root systems, assuming that $(T_k)$ holds for all elements of the class.  
\begin{prop}\label{P:Fk}
    Let $\mcC$ be a class of finite root systems with the property that if $R
    \in \mcC$, and $U$ is a subspace of the ambient space of $R$, then $R_U$ is
    isomorphic to an element of $\mcC$. Suppose there is some $k \geq 3$ such
    that $(T_k)$ holds for all irreducible root systems in $\mcC$, and 
    $(L_k)$ holds for all irreducible root systems in $\mcC$ of rank $k+1$. 
    Then $(L_k)$ holds for all $R \in \mcC$.
\end{prop}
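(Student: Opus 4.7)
I would proceed by strong induction on the rank $l$ of $R$. The reducible case reduces to the irreducible case: if $R = R_1 \times R_2$ with $S = S_1 \sqcup S_2$, then flats of $\mcA(S)$ of the form $X_1 \oplus V_2$ (where $V_2$ is the ambient space of $R_2$) have corank equal to that of $X_1$ in $\mcA(S_1)$, and $\mcA(S)_{X_1 \oplus V_2} = \mcA(S_1)_{X_1}$; applying the hypothesis forces $\mcA(S_1)_{X_1}$ free for all corank-$\leq k$ flats $X_1$, and $(L_k)$ for the irreducible factor $R_1$ (in $\mcC$ by closure under $R \mapsto R_U$) then yields $\mcA(S_1)$ free, and symmetrically $\mcA(S_2)$ free. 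The base case $l \leq k$ is immediate since the center of $\mcA(S)$ has corank $\leq k$, and $l = k+1$ with $R$ irreducible is given.

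For the inductive step, fix $l \geq k+2$ and $R$ irreducible, and suppose for contradiction that some coconvex $S \subseteq R^+$ satisfies the $(L_k)$-hypothesis but $\mcA(S)$ is not free; I would choose such an $S$ minimizing the height sum $\sum_{\beta \in S} \operatorname{ht}(\beta)$. If $S$ is a terminal vertex of $\mcG^{Fr}(R)$, then $(T_k)$ for $R$ directly contradicts the assumption. Otherwise pick an outgoing edge $S \arr S' = \tau(S,\alpha)$ of $\mcG^{Fr}(R)$. Granting the key claim below that $S'$ still satisfies the $(L_k)$-hypothesis, minimality of $S$ forces $\mcA(S')$ to be free, since the height sum strictly drops under nontrivial Peterson translation. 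Lemma \ref{L:petersonrestrict} then gives $\widetilde{\mcA(S)}^H \iso \widetilde{\mcA(S')}^H$ for $H = \ker \alpha$, and Corollary \ref{C:freetranslate} applied with $\mcA_1 = \mcA(S')$ shows $\mcA(S)$ is free if and only if $\mcA(S)_X$ is free for all corank-$3$ flats $X \subseteq H$, a condition secured by the $(L_k)$-hypothesis on $S$ since $k \geq 3$. This contradicts $\mcA(S)$ being non-free, completing the step modulo the claim.

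It remains to establish the key claim: the $(L_k)$-hypothesis is preserved along any edge $S \arr S'$ of $\mcG^{Fr}(R)$. Given a flat $X$ of $\mcA(S')$ of corank $\leq k$, let $U$ be the subspace of $V$ spanned by the roots $\beta \in S'$ with $\ker \beta \supseteq X$, so $\dim U \leq k$ and the desired statement is that $\mcA(S'_U)$ is free. If $\alpha \in U$, then the locality of Peterson translation from the proof of Proposition \ref{P:petersonfree} gives $\tau(S,\alpha) \cap U = \tau(S_U,\alpha)$, so $S'_U = \tau(S_U,\alpha)$; the hypothesis on $S$ makes $\mcA(S_U)$ free, and the $\mcG^{Fr}$-edge condition applied with subspace $U$ then gives $\mcA(S'_U) = \mcA(\tau(S_U,\alpha))$ free.

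The main obstacle is the case $\alpha \notin U$. Here I would set $U' = U + \R\alpha$, of dimension $\leq k+1 < l$. Since $R_{U'} \in \mcC$ and has rank $< l$, the outer inductive hypothesis supplies $(L_k)$ for $R_{U'}$. To apply this to $S_{U'}$, I would observe that any flat $Y$ of $\mcA(S_{U'})$ of corank $\leq k$ corresponds to a subspace $W \subseteq U'$ with $\dim W \leq k$ and $\mcA(S_{U'})_Y \iso \mcA(S_W) \iso \mcA(S)_{X_W}$, which is free by the $(L_k)$-hypothesis on $S$; hence $\mcA(S_{U'})$ itself is free. The $\mcG^{Fr}$-edge condition (applied now with $\alpha \in U'$) then gives $\mcA(S'_{U'})$ free, and localizing this free arrangement at the flat corresponding to $U \subseteq U'$ yields $\mcA(S'_U)$ free, finishing the claim and the proposition.
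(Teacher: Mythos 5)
Your proof is correct and follows essentially the same route as the paper: induct on the height sum of $S$ (you phrase it as a minimal counterexample), apply $(T_k)$ at terminal vertices of $\mcG^{Fr}$, propagate the corank-$\leq k$ freeness hypothesis along an edge $S \arr \tau(S,\alpha)$ by passing to $U' = U + \R\alpha$ of rank $\leq k+1$ where $(L_k)$ is available, and close with Lemma \ref{L:petersonrestrict} and Corollary \ref{C:freetranslate}. The only differences are cosmetic: your outer induction on rank and the case split $\alpha \in U$ versus $\alpha \notin U$ are both unnecessary (the paper handles everything via $U'$, and $(L_k)$ in rank $\leq k+1$ follows directly from the hypotheses), while your explicit reduction of the reducible case to irreducible factors is a point the paper leaves implicit.
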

\begin{proof}
    First note that our hypothesis implies that $(L_k)$ holds for all root
    systems in $\mcC$ of rank $\leq k+1$.  Let $R \in \mcC$, and suppose $S
    \subseteq R^+$ is a coconvex set with the property that $\mcA(S)_X$ is free
    for all flats $X$ of corank $\leq k$.  This is equivalent to saying that
    $\mcA(S_U)$ is free for all subspaces $U \subseteq V$ spanned by at most $k$
    elements of $S$. 

    We show that $\mcA(S)$ is free by induction on the sum of
    the heights of the roots in $S$. If $S$ is a terminal vertex in $\mcG^{Fr}(R)$,
    then $\mcA(S)$ is free by $(T_k)$. Otherwise, there is $\alpha \in S$ such
    that $S \arr \tau(S,\alpha)$ is an edge in $\mcG^{Fr}$. Now suppose that
    $U$ is a subspace of $V$ spanned by at most $k$ elements of
    $\tau(S,\alpha)$, and let $U' = U + \R \alpha$.
    Then $S_{U'}$ is a coconvex subset of $R_{U'}^+$, and $R_{U'}$ is a root
    system of rank $\leq k+1$. If $U''$ is spanned by $\leq k$ elements of
    $S_{U'}$, then $\mcA\left((S_{U'})_{U''}\right) = \mcA(S_{U''})$ is free,
    and since $(L_k)$ holds for $R_{U'}$ we conclude that $\mcA(S_{U'})$ is
    free. Since $\alpha \in U'$, $\mcA(\tau(S,\alpha)_{U'}) =
    \mcA(\tau(S_{U'},\alpha))$, and $\mcA(\tau(S_{U'}, \alpha))$ is free by the definition of $\mcG^{Fr}$. Hence
    $\mcA(\tau(S,\alpha)_U)$ is free for all subspaces $U$ spanned by at most
    $k$ elements of $\tau(S,\alpha)$. Since the sum of the heights of the roots
    in $\tau(S,\alpha)$ is less than the sum of the heights of the roots in
    $S$, we conclude by induction that $\mcA(\tau(S,\alpha))$ is free. But
    since $k \geq 3$, we know that $\mcA(S_U)$ is free for all subspaces $U$ of
    dimension $\leq 3$, and hence $\mcA(S)$ is free by Lemma
    \ref{L:petersonrestrict} and Corollary \ref{C:freetranslate}, part (b).
\end{proof}

\begin{proof}[Proof of Theorem \ref{T:coconvex} and Corollary \ref{C:coconvexpat}]
    Clearly we only need to prove the results for irreducible root systems. 
    For $F_4$ and $G_2$ there is nothing to be done, since these root systems
    have rank $\leq 4$. The root subsystems of an irreducible root system $R$
    are well-known; see for instance \cite[Tables 9 and 10]{Dyn57}, as well as
    \cite{DL11} for a modern account. The root systems $R_U$, for $U$ a subspace
    of $V$, can be easily determined from these results. For instance,
    the restriction $R_U$ of $R = B_n$ to a
subspace is always a direct sum of root systems
of type $A$ and $B$. Thus if $\mcC$ is the class
of root systems $R$ whose irreducible factors are
of types $A$ or $B$, then the restriction $R_U$
    of a rootsystem $R \in \mcC$ to a subspace $U$ is also in $\mcC$. By Proposition
    \ref{P:petersonfree}, $\mcG^{Fr}(R) = \mcG(R)$ for every $R \in \mcC$, and
    the terminal elements of $\mcG^{Fr}(R)$ are simply the lower order
    ideals in $R$, so the condition $(T_k)$ holds
    for every $R \in \mcC$ (irregardless of $k$). We use a computer to check that
    $(L_3)$ holds for $A_4$ and $B_4$, after which Proposition \ref{P:Fk}
    implies that $(L_3)$ holds for all root systems in $\mcC$.

    If $\mcC$ is instead the class of simply-laced root systems (those whose
    irreducible factors are of types $A$, $D$, and $E$), we can make the
    exact same argument, except that $(L_3)$ does not hold for $D_4$. 
    However, we can verify by computer that $(L_4)$ holds for $D_5$.
    Since we already know that $(L_3)$ holds for every $A_n$, the condition
    $(L_4)$ holds for $A_5$, and Proposition \ref{P:Fk} implies that
    $(L_4)$ holds for all simply-laced root systems. 

    This leaves type $C_n$, which is the only classical type for which $(T_k)$
    does not hold trivially. Let $\mcC$ be the class of root systems whose
    irreducible factors are of types $A$ or $C$. This class is also closed under restriction to a subspace. We can verify on a computer
    that $(L_3)$ holds for $C_4$. To finish the argument, we show that 
    $(T_3)$ holds for all $C_n$, $n \geq 4$ (from which it follows immediately
    that $(T_3)$ holds for all elements of $\mcC$).

    We use the usual presentation of the root system $R = C_n$, with
    \begin{equation*}
        C_n^+ = \{e_j - e_i : 1 \leq i < j \leq n \} \cup \{ e_i + e_j : 1 \leq i \leq j \leq n\}.
    \end{equation*}
    With this choice of positive roots, the simple roots are $\alpha_1 = 2e_1$ and $\alpha_i = e_i - e_{i-1}$,
    $i=2,\ldots,n$. The root system $A_{n-1}$ is canonically contained in $C_n$
    as the span of the simple roots $\alpha_2, \ldots, \alpha_n$, or
    equivalently as the set of vectors of the form $e_j - e_i$, $i \neq j$.
    If $k \leq n$, we regard $C_k$ and $A_{k-1}$ as natural subsystems of $C_n$
    and $A_{n-1}$. We need three lemmas:

    \begin{lemma}\label{L:prooflem1}
        A root $\alpha \in C_n^+$ belongs to the subsystem $A_{n-1}^+$ if and
        only if there is a long root $\beta \in C_n^+$ such that if $U_0 =
        \vspan\{\alpha, \beta\}$, then $R_{U_0} \iso C_2$ and $\alpha$ and
        $\beta$ are the short and long simple roots of $R_{U_0}$ respectively.
        Furthermore,
        if $\alpha \in A_{n-1}^+$ then the root $\beta$ and subspace $U_0$
        are unique.
    \end{lemma}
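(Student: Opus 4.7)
The plan is to verify the lemma by direct computation in the explicit presentation of $C_n$. The positive roots split into three classes: the short roots $e_j - e_i$ for $i < j$, which form $A_{n-1}^+$; the short roots $e_i + e_j$ for $i < j$; and the long roots $2e_k$. The key diagnostic I will exploit is that in any $C_2$ subsystem with short simple root $\alpha$ and long simple root $\beta$, the vector $2\alpha + \beta$ is also a root of $C_2$ (the highest root).

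For the forward direction, given $\alpha = e_j - e_i$ with $i<j$, I take $\beta = 2e_i$. Then $U_0 = \vspan\{\alpha,\beta\} = \vspan\{e_i, e_j\}$ and $R_{U_0} = \{\pm 2e_i, \pm 2e_j, \pm(e_j-e_i), \pm(e_i+e_j)\} \iso C_2$. The induced positive system $R_{U_0} \cap C_n^+$ is $\{2e_i, 2e_j, e_j-e_i, e_i+e_j\}$, and the identities $e_i + e_j = \alpha + \beta$ and $2e_j = 2\alpha + \beta$ show that $\alpha$ and $\beta$ are the short and long simple roots of $R_{U_0}^+$, as required.

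For the reverse direction together with uniqueness, suppose $\beta = 2e_k \in C_n^+$ is a long root with $R_{U_0} \iso C_2$ and $\alpha, \beta$ the short and long simple roots of $R_{U_0}^+$. Since $\alpha$ is short, it equals either $e_j - e_i$ or $e_i + e_j$ for some $i < j$. In the latter case, $2\alpha + \beta = 2e_i + 2e_j + 2e_k$ has standard-basis coordinate sum $6$, whereas every root of $C_n$ has coordinate sum in $\{0, \pm 2\}$; so this case is impossible. Hence $\alpha = e_j - e_i \in A_{n-1}^+$. Now the required root $2\alpha + \beta = -2e_i + 2e_j + 2e_k$ must lie in $C_n$: the subcases $k = j$ (yielding $-2e_i + 4e_j$) and $k \notin \{i,j\}$ (yielding a vector with three nonzero coordinates of magnitude $2$) are ruled out by inspection, leaving $k = i$, which produces the valid root $2e_j$. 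Therefore $\beta = 2e_i$ is uniquely determined, and consequently $U_0 = \vspan\{e_i, e_j\}$ is as well.

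The proof is a routine case analysis; the only mild obstacle is spotting the highest-root identity $2\alpha + \beta \in R_{U_0}$, which uniformly handles both the exclusion of $\alpha = e_i + e_j$ and the pinning down of $\beta$ when $\alpha \in A_{n-1}^+$.
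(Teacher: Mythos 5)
Your proof is correct and takes essentially the same route as the paper's: a direct coordinate computation in the standard presentation of $C_n^+$, exploiting the fact that an embedded $C_2$ with short/long simple roots $\alpha,\beta$ must contain $\alpha+\beta$ and $2\alpha+\beta$ (the paper rules out $\alpha = e_i+e_j$ via the nonexistence of a long $\beta$ with $\alpha+\beta \in R^+$, while you use $2\alpha+\beta$; both are the same style of obstruction). Your uniqueness argument spells out the case analysis that the paper's proof asserts in one line, but the content is identical.
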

    \begin{proof}
        If $\alpha$ is short but not in $A_{n-1}$, then $\alpha$ must be
        of the form $e_i + e_j$ for $i \neq j$, and there is no positive long
        root $\beta$ such that $\beta + \alpha \in R^+$. So if there is a 
        subspace $U_0 \ni \alpha$ such that $R_{U_0} \iso C_2$ and $\alpha$
        is the short simple root in $R_{U_0}$, then we must have $\alpha \in A_{n-1}$. 
        
        Conversely, if $\alpha = e_j - e_i$ for $i < j$ then there is a 
        unique $\beta$ such that $R_{U_0} \iso C_2$, namely $\beta = 2 e_i$. 
    \end{proof}

    \begin{lemma}\label{L:prooflem2}
        Suppose $S$ is a terminal vertex in $\mcG^{Fr}(C_n)$, and $\alpha$
        is a root in $S$ such that $\tau(S, \alpha) \neq S$. Then $\alpha$
        belongs to $A_{n-1}$ and, in the notation from Lemma \ref{L:prooflem1},
        one of the following conditions holds:
        \begin{enumerate}[(a)]
            \item $S_{U_0} = \{ \alpha, \beta, \beta + 2 \alpha\}$, or
            \item $S_{U_0} = \{ \alpha, \beta + \alpha\}$. 
        \end{enumerate}
    \end{lemma}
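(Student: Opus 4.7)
The plan is to combine the terminality of $S$ in $\mcG^{Fr}(C_n)$ with the classification \eqref{E:nonpf} of non-Peterson-free free coconvex sets in $C_3$ to reduce the lemma to a finite check inside $C_3$.

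First I would show that $\alpha$ must be short. If $\alpha$ were long, then Proposition \ref{P:petersonfree}(b) applied inside every subspace $U \ni \alpha$ would yield that $\mcA(\tau(S_U, \alpha))$ is free whenever $\mcA(S_U)$ is free, so the edge $S \to \tau(S, \alpha)$ would lie in $\mcG^{Fr}(C_n)$, contradicting the terminality of $S$. Next, since $\tau(S, \alpha) \neq S$ and $S \to \tau(S, \alpha)$ is not an edge of $\mcG^{Fr}(C_n)$, there exists a $3$-dimensional subspace $U \ni \alpha$ on which $\mcA(S_U)$ is free but $\mcA(\tau(S_U, \alpha))$ is not. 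I would then narrow $R_U$ down to $C_3$ by elimination. Simply-laced rank-$3$ subsystems, as well as any rank-$3$ subsystem in which $\alpha$ does not lie in a type-$C$ factor, are ruled out by Proposition \ref{P:petersonfree}(b). Subsystems of the form $C_2 \times A_1$ with $\alpha$ in the $C_2$ factor are ruled out because Peterson translation respects the product decomposition of the arrangement and every arrangement of rank at most $2$ is free, so freeness is preserved in this setting. Subsystems of rank less than $3$ give arrangements in $U$ of rank at most $2$, which are automatically free both before and after translation. Since $C_n$ contains no $B_3$ or $G_2$ subsystems, the only remaining possibility is $R_U \cong C_3$, and then $S_U$ is a free coconvex subset of $C_3$ which is not Peterson-free via $\alpha$, so $S_U \in \{S_1, S_2, S_3\}$ by \eqref{E:nonpf}.

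Finally, I would verify conclusions (a) and (b) by direct enumeration. The uniqueness clause of Lemma \ref{L:prooflem1} applied in $C_n$ guarantees that the $\beta$ associated to $\alpha$ coincides with its counterpart inside $R_U \cong C_3$, so $U_0 = \vspan(\alpha, \beta) \subseteq U$ and $S_{U_0} = S_U \cap U_0$. For each of the three sets $S_1, S_2, S_3$ and each short root $\alpha$ at which the corresponding Peterson translation is nontrivial, I would use Lemma \ref{L:prooflem1} to identify the pair $(\beta, U_0)$, confirm that $\alpha$ lies in the canonical $A_2 \subset C_3$ (and hence in $A_{n-1} \subset C_n$), and read off from the explicit descriptions in \eqref{E:nonpf} that $S_i \cap U_0$ is either $\{\alpha, \beta, \beta+2\alpha\}$ or $\{\alpha, \beta+\alpha\}$. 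The hard part is the localization in the second step: uniformly excluding every reducible or simply-laced rank-$3$ subsystem $R_U$ containing $\alpha$ as a short root requires combining Proposition \ref{P:petersonfree}(b) with the product decomposition of Peterson translation and with the freeness of all rank-$\leq 2$ arrangements.
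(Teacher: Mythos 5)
Your proof follows the paper's route up to the identification of $S_U$ with one of the sets $S_1,S_2,S_3$ of Equation \eqref{E:nonpf}: the reduction to a $3$-dimensional bad subspace $U$ via Lemma \ref{L:petersonrestrict} and Corollary \ref{C:freetranslate}, and the elimination of all rank-$3$ subsystems other than $C_3$, match the paper (your elimination is in fact spelled out in more detail than the paper's one-line appeal to Section \ref{S:ptgraph}, and is correct). However, your final ``direct enumeration'' step has a genuine gap at $S_2$. Writing $C_3^+$ in coordinates, $S_2=\{2e_1,\,e_3-e_2,\,e_1+e_2,\,2e_2,\,e_1+e_3,\,2e_3\}$, and the short root $\alpha'=\alpha_1+\alpha_2+\alpha_3=e_1+e_3$ satisfies $\tau(S_2,\alpha')=\{2e_1,\,e_3-e_1,\,e_3-e_2,\,e_1+e_2,\,2e_2,\,e_1+e_3\}$, whose characteristic polynomial is $(t-1)(t^2-5t+7)$; this does not split over $\Z$, so $\mcA(\tau(S_2,\alpha'))$ is not free. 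Thus $S_2$ admits a freeness-destroying root that does \emph{not} lie in the canonical $A_2\subset C_3$, and your claimed verification that ``$\alpha$ lies in the canonical $A_2$'' is simply false in this case; your argument stalls there with no way to conclude $\alpha\in A_{n-1}$.

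The missing idea is that the case $S_U\cong S_2$ must be ruled out using the terminality of $S$ with respect to a \emph{different} root, not analyzed. The paper observes that $S_2$ contains a long root $\beta$ (e.g.\ $2e_1$, since $e_1+e_2\in S_2$ but $e_2-e_1\notin S_2$) with $\tau(S_2,\beta)\neq S_2$; lifting, $\tau(S,\beta)\neq S$, and since $\beta$ is long, Proposition \ref{P:petersonfree}(b) applies in every subspace containing $\beta$, so $S\arr\tau(S,\beta)$ is an edge of $\mcG^{Fr}(C_n)$, contradicting terminality. Only after excluding $S_2$ does the enumeration over $S_1$ and $S_3$ go through, because for those two sets the only roots whose translation destroys freeness are $\alpha_2$, $\alpha_3$, $\alpha_2+\alpha_3$, all in $A_2$. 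A secondary imprecision: your enumeration ranges over short roots whose translation is merely \emph{nontrivial}, but the relevant condition coming from the definition of $\mcG^{Fr}$ is that the translation is \emph{non-free}; the former is a strictly larger set of roots and would force you to examine pairs for which the conclusion of the lemma need not hold.
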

    \begin{proof}
        By the definition of $\mcG^{Fr}$, there must be a subspace $U$ containing
        $\alpha$ such that $\mcA(S_U)$ is free and $\mcA(\tau(S_U, \alpha))$ is 
        not. By Corollary \ref{C:freetranslate} and Lemma \ref{L:petersonrestrict}, we
        can take $U$ to be of dimension $3$. As discussed in Section \ref{S:ptgraph},
        we must have $R_U \iso C_3$, and this isomorphism must send $S_U$ to
        one of the coconvex sets $S_i \subset C_3$ from Equation (\ref{E:nonpf}). 
        If $S_U$ is sent to $S_2$, then there is a long root $\beta$ such that
        $\tau(S_U, \beta) \neq S_U$. Consequently $\tau(S, \beta) \neq S$, and
        by Proposition \ref{P:petersonfree}, $S$ is not terminal. So $S_U$
        is sent to either $S_1$ or $S_3$. The only roots $\alpha'$ in $S_1$ and
        $S_3$ for which $\tau(S_i, \alpha')$ is not free are $\alpha_2$,
        $\alpha_3$, and $\alpha_2 + \alpha_3$, so $\alpha$ must be sent to one
        of these roots in $C_3$. Since all these roots belong to $A_{2} \subseteq C_3$, it
        follows from Lemma \ref{L:prooflem1} that there is $\beta' \in S_U$
        such that $\beta'$ and $\alpha$ form the simple roots of a 
        subsystem $R_{U_0} \iso C_2$. Thus $\alpha$ lies in $A_{n-1}$, 
        and the rest of the lemma follows from looking at the image of $S_{U_0}$
        inside of $S_1$ and $S_3$. 
    \end{proof}

    \begin{lemma}\label{L:prooflem3}
        Suppose $S$ is a terminal vertex in $\mcG^{Fr}(C_n)$. 
        \begin{itemize}
            \item If $e_k - e_i \in S$ for $1 \leq i < k$ then $e_j - e_i \in S$
                for all $i < j < k$.
    
            \item If $e_l + e_k \in S$ for some $1 \leq l, k \leq n$, then $e_j
                    - e_i \in S$ for all $1 \leq i < j \leq k$.
        \end{itemize}
    \end{lemma}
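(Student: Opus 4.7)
I would prove both bullets by contradiction, using Lemma \ref{L:prooflem2}, which forbids any $\alpha \in S \setminus A_{n-1}$ from having $\tau(S,\alpha) \neq S$. The strategy is uniform: assume the conclusion fails, use coconvexity to pull further roots into $S$, and then exhibit a short or long root $\alpha \in S \setminus A_{n-1}$ together with an $\alpha$-string in $R^+$ that is not compressed to the bottom in $S$.

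\emph{First bullet.} Assume $e_k - e_i \in S$ but $e_j - e_i \notin S$ for some $i < j < k$. Coconvexity applied to $e_k - e_i = (e_k - e_j) + (e_j - e_i)$ forces $e_k - e_j \in S$, and the $(e_k - e_j)$-string $\{e_j - e_i, e_k - e_i\}$ is not compressed, so $\tau(S, e_k - e_j) \neq S$. By Lemma \ref{L:prooflem2} with $U_0 = \vspan\{e_j, e_k\}$, the intersection $S \cap U_0^+$ is either (a) $\{e_k - e_j, 2 e_j, 2 e_k\}$ or (b) $\{e_k - e_j, e_j + e_k\}$. In case (a), coconvexity on $2 e_j = (e_j - e_i) + (e_j + e_i)$ forces $e_j + e_i \in S$; then the $(e_j + e_i)$-string $\{e_j - e_i, 2 e_j\}$ is not compressed, so $\tau(S, e_j + e_i) \neq S$ with $e_j + e_i \notin A_{n-1}$. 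In case (b), coconvexity on $e_j + e_k = (e_j - e_i) + (e_k + e_i)$ forces $e_k + e_i \in S$, and the $(e_k + e_i)$-string $\{e_j - e_i, e_j + e_k\}$ is not compressed, again with $e_k + e_i \notin A_{n-1}$. Either way, Lemma \ref{L:prooflem2} is violated.

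\emph{Second bullet.} WLOG $l \leq k$ by the symmetry $e_l + e_k = e_k + e_l$; by the first bullet, it suffices to prove $e_k - e_i \in S$ for every $i < k$. Assume this fails for some $i$. If $l < k$ and $i = l$, coconvexity on $e_l + e_k = (e_k - e_l) + 2 e_l$ forces $2 e_l \in S$, and the $(2 e_l)$-string $\{e_k - e_l, e_k + e_l\}$ is not compressed. If $l < k$ and $i \neq l$, coconvexity on $e_l + e_k = (e_k - e_i) + (e_l + e_i)$ forces $e_l + e_i \in S$, and the $(e_l + e_i)$-string $\{e_k - e_i, e_l + e_k\}$ is not compressed. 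The boundary case $l = k$ is handled analogously via the decomposition $2 e_k = (e_k - e_i) + (e_k + e_i)$, which forces $e_k + e_i \in S$ and leaves the $(e_k + e_i)$-string $\{e_k - e_i, 2 e_k\}$ uncompressed. In every subcase the resulting root lies outside $A_{n-1}$, contradicting Lemma \ref{L:prooflem2}.

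\emph{Main obstacle.} The only nontrivial step is the bookkeeping: choosing, in each subcase, the correct decomposition of the known element of $S$ to feed into coconvexity together with the short or long root outside $A_{n-1}$ whose $\alpha$-string refutes compression. Once these witnesses are identified, each string computation is a one-line check.
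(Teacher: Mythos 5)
Your proof is correct and takes essentially the same approach as the paper's: both rest on Lemma \ref{L:prooflem2} (the $A_{n-1}$ membership constraint together with the two possible shapes of $S_{U_0}$), coconvexity, and the same two-element $\alpha$-string computations. The only difference is organizational --- you run each bullet as a direct contradiction, while the paper first isolates the claim that $e_l+e_k \in S$ forces $e_k - e_i \in S$ for all $i<k$ and then reuses it for both bullets.
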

    \begin{proof}
        We start by showing that if $e_l + e_k \in S$, then $e_k - e_i \in S$
        for all $1 \leq i < k$. Indeed, if $e_i + e_l \not\in S$ then $e_k - e_i =
        (e_l+e_k) - (e_i + e_l) \in S$ by coconvexity. If $e_i + e_l \in S$
        then $\tau(S, e_i + e_l) = S$ by Lemma \ref{L:prooflem2}, and we get
        the same conclusion.

        Now for the first part of the lemma, let $\alpha = e_k - e_j$, so
        $e_j - e_i = (e_k - e_i) - \alpha$. If $\alpha \not\in S$ or $\tau(S,
        \alpha) = S$ then we are done, so suppose $\alpha \in S$ and
        $\tau(S, \alpha) \neq S$. Let $\beta = 2 e_j$ and $U_0 = \vspan\{
        \alpha, \beta\}$, so $R_{U_0} \iso C_2$ with simple roots 
        $\alpha$ and $\beta$. If condition (a) holds in Lemma \ref{L:prooflem2}
        then $\beta \in S_{U_0}$, while if condition (b) holds then
        $\beta + \alpha = e_k + e_j \in S_{U_0}$. In both cases
        $e_j - e_i \in S$ by first paragraph.

        For the second part of the lemma, we know that $e_k - e_i
        \in S$ for $1 \leq i < k$, and hence $e_j - e_i \in S$ for
        all $1 \leq i < j \leq k$ by the first part of the lemma.
    \end{proof}
    Suppose $S$ is a terminal vertex of $\mcG^{Fr}(C_n)$.  We can assume that
    $S$ is not a lower order ideal, since otherwise $\mcA(S)$ is free by
    Theorem \ref{T:idealfree}. We can also assume that $S \not\subseteq
    C_{n-1}$, or in other words that $S \setminus C^+_{n-1}$ is non-empty. We
    first consider the case that $S \setminus C^+_{n-1}$ is contained in
    $A^+_{n-1}$. If $e_n - e_i$, $e_n - e_j \in S \setminus
    C^+_{n-1}$, where $1 \leq i < j < n$, then $e_j - e_i  = (e_n - e_i)
    - (e_n - e_j) \in S$ by Lemma \ref{L:prooflem3}. Consequently 
    \begin{equation*}
        X = \bigcap_{\alpha \in S \cap C_{n-1}^+} \ker \alpha
    \end{equation*}
    is a modular coatom of $\mcA(S)$. Thus $\mcA(S)$ will be free if and only
    if $\mcA(S)_X = \mcA(S \cap C_{n-1}^+)$ is free. 

    Now suppose that $S \setminus C^+_{n-1}$ is not completely contained
    in $A^+_{n-1}$. Then $e_l + e_n \in S$ for some $1 \leq l \leq n$, and
    hence $e_j - e_i \in S$ for all $1 \leq i < j \leq n$ by Lemma
    \ref{L:prooflem3}. The free arrangements $\mcA(S)$ where $S$ contains
    $A^+_{n-1}$ have been characterized by Edelman and Reiner \cite[Theorem
    4.6]{ER94}. In particular, they show that $\mcA(S)$ is free if $\mcA(S)_X$
    is free for all flats $X$ of corank $\leq 4$.\footnote{In fact, their
    criterion applies even when $S$ is not coconvex. To see that freeness 
    follows from checking flats of corank $\leq 4$, use the first part of the
    proof of Theorem 4.6 combined with Theorem 4.1 and Lemma 4.5.} But since
    we already know that $(L_3)$ holds for $C_4$, it is enough to check that
    $\mcA(S)_X$ is free for all flats of corank $\leq 3$. Thus $(T_3)$
    holds for all $C_n$, finishing the proof.
\end{proof}

\subsection{Determining freeness on a computer}\label{SS:computer}

Many of the proofs in this paper rely on computer verification. 
The positive roots of a given root system can be listed using
common mathematical software, such as Maple (with John Stembridge's coxeter package)
or the Sage Mathematics system. Given the positive roots, it is straightforward
to list all coconvex sets or compute the Peterson translation of a coconvex
set.  Verifying the freeness of an arrangement $\mcA(S)$ is more complicated, so
in this section we explain how this is done. 

Given $H \in \mcA$, let $\mcA \setminus H$ denote the deletion of $H$ from
$\mcA$. An arrangement is said to be inductively free with exponents
$m_1,\ldots,m_l$ if either $\mcA$ is empty and $m_1=\ldots=m_l=0$, or there is
a hyperplane $H \in \mcA$ and index $i$ such that $\mcA \setminus H$ is
inductively free with exponents $m_1,\ldots,m_{i-1},m_i-1,m_{i+1}, \ldots,m_l$,
and $\mcA^H$ is inductively free with exponents
$m_1,\ldots,m_{i-1},m_{i+1},\ldots,m_{l}$. Inductive freeness can be checked on
a computer in low rank using brute force, and in higher ranks using
optimizations as in \cite{Sl13}.

Given a real arrangement $\mcA$, the Poincare polynomial of $\mcA$ is
\begin{equation*}
    Q(\mcA, t) = \sum t^i \dim H^i(\mcM(\mcA)), 
\end{equation*}
where $\mcM(\mcA)$ is the complement of the complexification of $\mcA$. If
$\mcA$ is free with coexponents $m_1,\ldots,m_l$, then $Q(\mcA,t) = \prod_i
(1+m_i t)$ by a theorem of Terao \cite{Te81}. We say that $Q(\mcA,t)$ splits if
it can be written in this form, for some non-negative integers
$m_1,\ldots,m_l$. The polynomial $Q(\mcA,t)$ can easily be computed using a
restriction-deletion recurrence, and thus we can determine if $Q(\mcA,t)$
splits on a computer.
\begin{defn}
    We say that $\mcA$ is verifiable free if $\mcA$ is inductively free, and
    verifiably non-free if $Q(\mcA,t)$ does not split. We say that freeness of
    $\mcA$ is verifiable if $\mcA$ is verifiably free or non-free. 
\end{defn}
It is not true that the freeness of $\mcA(S)$ is verifiable for every coconvex
set $S$. However, freeness is always verifiable in $A_3$, $B_3$, and $C_3$, and
this suffices for the proof of Proposition \ref{P:petersonfree}.  

To determine the minimal non-free coconvex patterns in a root system, we use
the following procedure for each coconvex set $S$:
\begin{itemize}
    \item Check whether or not $\mcA(S)$ is verifiably free. If $\mcA(S)$
        is verifiably free, stop and return FREE.
    \item Otherwise, for each subspace $U$ of the ambient space $V$ with
        $3 \leq \dim U < \rank \mcA(S)$,
        check whether or not the freeness of $\mcA(S_U)$ is verifiable. 
        \begin{itemize}
            \item If $\mcA(S_U)$ is verifiably non-free, stop and return
                NOT MINIMAL.
            \item If freeness of $\mcA(S_U)$ is not verifiable, stop and
                return AMBIGUOUS. 
            \item Otherwise, continue.
        \end{itemize}
    \item At the end of the loop, check whether or not $\mcA(S)$ is verifiably
        non-free. If so, return MINIMAL PATTERN.
    \item If, at this point, the freeness of $\mcA(S)$ is not verifiable,
        return AMBIGUOUS.
\end{itemize}
This procedure outputs AMBIGUOUS if $\mcA(S)$ is not verifiably
free, and either there is a subspace $U$ such that freeness of $\mcA(S_U)$ is
not verifiable, or $\mcA(S_U)$ is verifiably free for all subspaces $U$ and
$\mcA(S)$ is not verifiably non-free. However,
this procedure does not output AMBIGUOUS for any of the root systems $A_3$, $B_3$, $C_3$, $D_4$,
$F_4$, $A_4$, $B_4$, $C_4$, and $D_5$.  In
particular, we can use this procedure to check
that only the first five of these root systems
have any minimal non-free coconvex patterns, from
which we conclude that $(L_3)$ holds in
$A_4$--$C_4$ and $(L_4)$ holds in $D_5$. 

We use a similar method to check freeness of coconvex sets in the proof of Proposition \ref{P:petersonfree3}. Assuming that we can test for freeness, it is not hard to construct the Peterson translation graph, and from this graph determine all Peterson-free elements in types $C_4$ and $F_4$. 

The programs used to perform the computer
verifications listed in this paper are available
from the author's website. Implementations are
provided in Maple and C++, with the C++
implementation running in a matter of minutes for
$D_5$.  

\section{Geometric interpretation of the Peterson translate}\label{S:geometry}

The combinatorial Peterson translation defined in the previous section comes
from a geometric construction on the flag variety introduced by Peterson and
further developed by Carrell and Kuttler \cite{CK03}. Let $G$ be the linear
algebraic group associated to $R$, and fix a choice of Borel $B$ and maximal
torus $T \subseteq B$ compatible with the choice of positive roots $R^+$. As in
the introduction, let $X = G/B$ be the flag variety associated to $R$, and let
$X(w)$ be the Schubert variety indexed by $w \in W(R)$. Let $x, y \in W(R)$ be
two elements differing by a reflection, so $x = r_{\alpha} y$, where
$r_{\alpha}$ is reflection through some root $\alpha \in R^+$. Suppose further
that $x < y$ in Bruhat order, or equivalently that $\alpha \in I(y)$. There is
a $T$-invariant curve $C \subseteq X$ with $C^T = \{x,y\}$, and any $T$-module $M
\subseteq T_y X$ can be translated along $C$ to a $T$-module
$\tilde{\tau}(M,\alpha) \subseteq T_x X$, called the Peterson translate of $M$
along $C$. This geometric Peterson translate has been used by Carrell and
Kuttler to study smoothness for Schubert varieties, and we refer to \cite{CK03}
for more details. If $x,y \leq w$ in Bruhat order, then $C$ is contained in
$X(w)$, and hence if $M \subseteq T_y X(w)$ then $\tilde{\tau}(M,\alpha) \subseteq
T_x X(w)$.

Given a $T$-module $M$, let $\Omega M$ denote the set of $T$-weights. Given
$y \in W(R)$, we have
\begin{equation*}
    \Omega T_y X = y \Omega T_e X = y R^- = I(y) \cup \{\beta \in R^- : y^{-1} \beta \in R^-\}.
\end{equation*}
Given $S \subseteq \Omega T_y X$, there is a unique $T$-submodule $M \subseteq T_y X$
with $S = \Omega M$, and for any $\alpha \in I(y)$ we can define
\begin{equation*}
    \tilde{\tau}(S,\alpha) = \Omega \tilde{\tau}(M, \alpha) \subseteq \Omega T_{x} X,
\end{equation*}
where $x = r_{\alpha} y$. Carrell and Kuttler
give an explicit formula for
$\tilde{\tau}(S,\alpha)$, and this formula is the
basis for the definition of Peterson translation
in Section \ref{S:peterson}. The relation between the two notions of Peterson translation can be
explicitly stated as follows:
\begin{prop}\label{P:geometricpeterson}
    With notation as above, we have
    \begin{equation}\label{E:geom2comb}
        -x^{-1} \tilde{\tau}(S,\alpha) = \tau(-y^{-1} S, -y^{-1} \alpha),
    \end{equation}
    or in other words, the map $- y^{-1} : \Omega T_y X \arr R^+$ transforms
    $\tilde{\tau}(\cdot,\alpha)$ to combinatorial Peterson translation by
    $-y^{-1}\alpha$. 
\end{prop}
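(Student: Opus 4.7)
The plan is to unpack Carrell and Kuttler's explicit formula for $\tilde{\tau}(S,\alpha)$ and transport it under the affine-linear bijection $\phi_y := -y^{-1}\colon \Omega T_y X \to R^+$, checking that it becomes exactly the string-compression formula that defines $\tau$. In some sense the result is a tautology: the combinatorial Peterson translation in Section \ref{S:peterson} was reverse-engineered from the Carrell-Kuttler formula, so the content of the proof is really bookkeeping.

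First I would recall the explicit description of $\tilde{\tau}$ from \cite{CK03}. It acts on $T$-submodules $M \subseteq T_y X$ by partitioning the weight set $S = \Omega M \subseteq \Omega T_y X = yR^-$ into its maximal $\alpha$-strings within $\Omega T_y X$, and replacing each such string by the compressed subset lying in the matching $\alpha$-string of $\Omega T_x X = xR^-$ (where $x = r_\alpha y$), aligned to the endpoint coming from the $x$-side of the $T$-invariant curve $C$.

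Next I would verify that $\phi_y$ intertwines $\alpha$-strings with $(-y^{-1}\alpha)$-strings in $R^+$. Since $\alpha \in I(y)$ we have $-y^{-1}\alpha \in R^+$, and the identity $\phi_y(\gamma + i\alpha) = \phi_y(\gamma) + i(-y^{-1}\alpha)$ shows that $\phi_y$ sends each $\alpha$-string of $\Omega T_y X$ bijectively onto a $(-y^{-1}\alpha)$-string of $R^+$. An analogous statement holds for $\phi_x := -x^{-1}$: because $\phi_x = \phi_y \circ r_\alpha$ and $r_\alpha$ preserves each $\alpha$-string as a set (only reversing its orientation), the partition of $R^+$ into $(-y^{-1}\alpha)$-strings induced from $\Omega T_x X$ agrees with the one induced from $\Omega T_y X$.

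Finally I would match endpoints within each string. The CK03 compression in $\Omega T_x X$ starts from the $\alpha$-string endpoint $\gamma_0$ characterized by $\gamma_0 - \alpha \notin \Omega T_x X$; one checks that $\phi_x(\gamma_0)$ is precisely the minimal root $\beta$ of the matching $(-y^{-1}\alpha)$-string in $R^+$, i.e.\ the $\beta$ with $\beta + y^{-1}\alpha \notin R^+$, which is the basepoint used in the definition of $\tau$ in Section \ref{S:peterson}. Equation (\ref{E:geom2comb}) then follows directly. The main obstacle is sign bookkeeping: since $\phi_y$ and $\phi_x$ differ by the reflection $r_\alpha$, the ``bottom'' of an $\alpha$-string in $\Omega T_y X$ corresponds to the ``top'' of the string in $\Omega T_x X$, and one must confirm that the endpoint convention in \cite{CK03} matches the bottom-aligned compression used to define $\tau$. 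Once this correspondence is verified, no further computation is needed.
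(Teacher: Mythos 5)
Your overall route is the paper's: both proofs transport the Carrell--Kuttler formula through $-y^{-1}$, using the identity $x^{-1}r_\alpha = y^{-1}$ and the fact that $-y^{-1}$ carries $\alpha$-strings of $\Omega T_y X$ order-preservingly onto $(-y^{-1}\alpha)$-strings of $R^+$. The difference is packaging: the paper keeps the module form of the formula, $\tilde{\tau}(S,\alpha) = r_\alpha \Omega M^{-\alpha}$ with $M^{-\alpha}$ the unique $\mfg_{-\alpha}$-submodule of $T_yX$ isomorphic to $M$ as a $T^{\alpha}$-module, transports that characterization to $\mfn = \mfg/\mfb^-$ under $-y^{-1}$, and only at the very end reads off that the weight set of such a submodule is the compressed set $\tau(\Omega M,\alpha')$. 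You instead convert the formula into a string-compression statement in $\Omega T_x X$ at the outset and do the rest combinatorially.

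The gap is that the one nontrivial point --- which endpoint of each string the translate is aligned to --- is precisely the step you defer to ``one checks,'' and the two concrete claims you do make about it are each off by the reversal $r_\alpha$. Since $x^{-1}\alpha = y^{-1}r_\alpha\alpha = -y^{-1}\alpha$, the map $\phi_x = \phi_y\circ r_\alpha$ is order-\emph{reversing} on $\alpha$-strings, and unwinding $r_\alpha\Omega M^{-\alpha}$ (a $\mfg_{-\alpha}$-submodule is down-closed in each string of $\Omega T_yX$, and $r_\alpha$ flips each string) shows the Carrell--Kuttler translate is aligned at the endpoint $\gamma_0$ of each string of $\Omega T_xX$ with $\gamma_0 + \alpha \notin \Omega T_xX$, not $\gamma_0 - \alpha \notin \Omega T_xX$ as you assert; correspondingly it is that $+\alpha$ endpoint, not the $-\alpha$ one, that $\phi_x$ sends to the minimal root of the $\alpha'$-string in $R^+$. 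The two errors cancel, so your conclusion is correct, but as written neither intermediate claim is true, and neither is derived from the formula actually given in \cite{CK03} --- your ``explicit description'' of $\tilde{\tau}$ is asserted rather than proved. To close the argument, either verify the alignment directly from $r_\alpha\Omega M^{-\alpha}$ as above, or follow the paper and stay in the module language until the final step.
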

\begin{proof}
    Let $S$ be a subset of $\Omega T_y X$, and let $\alpha \in I(y)$, $x =
    r_{\alpha} y$.  Carrell and Kuttler give a formula for $\tilde{\tau}(S,
    \alpha)$ as follows: Let $\mfg$ be the Lie algebra of $G$, and let
    $\mfg_{\beta}$ be the root spaces of the roots $\beta \in R$. Let $T^{\alpha}$
    be the kernel of the character $e^{\alpha} : T \arr \C$ (the subtorus
    $T^{\alpha}$ is the stabilizer of a generic point in the curve $C$ connecting
    $y$ and $x$). Let $M$ be the submodule of $T_y X$ with $\Omega M = S$. Note
    that since $T_e X$ is a $\mfg_{-y^{-1} \alpha}$-module, $T_y X$ is a
    $\mfg_{-\alpha}$-module. Carrell and Kuttler show that
    \begin{equation}\label{E:geomtranslate}
        \tilde{\tau}(S,\alpha) = r_{\alpha} \Omega M^{-\alpha},
    \end{equation}
    where $M^{-\alpha}$ is the unique $\mfg_{-\alpha}$-submodule of $T_y X$ such
    that $M^{-\alpha}$ is isomorphic to $M$ as a $T^{\alpha}$-module. So
    \begin{equation*}
        - x^{-1} \tilde{\tau}(S,\alpha) = -x^{-1} r_{\alpha} \Omega M^{-\alpha}
            = -y^{-1} \Omega M^{-\alpha}.
    \end{equation*}
    Let $\alpha' = -y^{-1} \alpha \in R^+$. Then $-y^{-1}$ gives a bijective
    correspondence between $\mfg_{-\alpha}$-submodules of $T_y X$ and
    $\mfg_{-\alpha'}$-submodules of $\mfn = \mfg / \mfb^-$, where $\mfb^-$ is the
    negative Borel subalgebra of $\mfg$. Furthermore, $M$ and $N$ are isomorphic as
    $T^{\alpha}$-modules if and only if $-y^{-1} M$ and $-y^{-1} N$ are isomorphic
    as $T^{\alpha'}$-modules.  Finally, it is easy to see that if $M \subseteq \mfn$,
    and $N$ is an isomorphic $T^{\alpha'}$-module which is a
    $\mfg_{-\alpha'}$-submodule of $\mfn$, then $\Omega N = \tau(\Omega M,
    \alpha')$. Thus Carrell and Kuttler's formula in Equation
    (\ref{E:geomtranslate}) is equivalent to the formula in Equation
    (\ref{E:geom2comb}).
\end{proof}

It follows that we can apply the results of
Sections \ref{S:peterson} and \ref{S:ptgraph} in
this geometric setting.

\begin{example}\label{Ex:smooth}
    As mentioned in the introduction, in \cite{Sl13} it is shown that if $w$ is
    rationally smooth, then $\mcA(I(w))$ is free with coexponents equal to the     
    exponents of $w$. The exponents of $w$ are integers $m_1,\ldots,m_l$, where
    $l$ is the rank of $r$, such that the Poincare series $P_w(q) = \sum_{x
    \leq w} q^{\ell(x)}$ for the Bruhat interval below $w$ is equal to the
    product $\prod_i [m_i+1]_q$, where $[k]_q$ is the $q$-integer
    $(1+q+\ldots+q^{k-1})$.

    The elements listed in Table \ref{TBL:badpatlist} are all non-rationally
    smooth, and hence Corollary \ref{C:biconvexpat} gives another proof that
    $\mcA(I(w))$ is free when $w$ is rationally smooth. Corollary
    \ref{C:biconvexpat} does not imply that the coexponents of $\mcA(I(w))$ are
    equal to the exponents of $w$. However, $S(w) := -\Omega T_e X(w)$ is a
    lower order ideal, and when $X(w)$ is smooth a theorem of Akyildiz and
    Carrell states that the exponents of $w$ are equal to $\Exp S(w)$. In
    this situation we can use Corollary \ref{C:ptinversion} to show that the
    coexponents of $\mcA(I(w))$ are also equal to $\Exp S(w)$. Choose a sequence
    of element $e = y_k < y_{k-1} < \cdots < y_0 = w$, where $y_i = r_{\beta_i}
    y_{i-1}$ for some $\beta_i \in R^+$. As shown in \cite{CK03},
    $\tilde{\tau}(\Omega T_{y_i} X(w), \beta_i) = \Omega T_{y_{i+1}} X(w)$. 
    By Corollary \ref{C:ptinversion} and Equation (\ref{E:geom2comb}), if
    $\mcA(\Omega T_{y_i} X(w))$ is free then $\mcA( \Omega T_{y_{i+1}} X(w) )$
    is free with the same coexponents. Since $\mcA( \Omega T_e X(w))$ is free
    with coexpoents $\Exp S(w)$ by Theorem \ref{T:idealfree}, $\mcA(\Omega T_w
    X(w))$ has the same coexponents. 

    More generally, if $x \in W(R)$ is a smooth point of $X(w)$, then we can
    choose a sequence as in the last paragraph with $y_k = x$. It follows from
    Proposition \ref{P:petersonfree} that $-x^{-1} \Omega T_x X(w)$ is coconvex for
    every smooth point $x \in X(w)^T$, and if $\mcA(I(w))$ is free then
    $\mcA(\Omega T_x X(w))$ is free with the same coexponents. 
\end{example}

One difference between geometric and combinatorial Peterson translation is that
the weight spaces $\Omega M \subseteq \Omega T_y X$ can be partitioned into
positive and negative roots, and the positive roots are a subset of the inversion set $I(y)$. 
\begin{prop}\label{P:augmented}
    If $S$ is a coconvex subset containing an inversion set $I(y)$, and
    $\alpha \in I(y)$, then $I(r_{\alpha} y) \subseteq \tau(S, \alpha)$.
\end{prop}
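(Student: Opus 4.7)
The plan is to reduce to the case $S = I(y)$ via a monotonicity property of $\tau$, and then for each $\beta \in I(r_\alpha y)$ count enough elements of $I(y)$ on the $\alpha$-string through $\beta$ in $R^+$ to force $\beta \in \tau(I(y), \alpha)$. The monotonicity I need is that $S_1 \subseteq S_2$ implies $\tau(S_1, \alpha) \subseteq \tau(S_2, \alpha)$, which is immediate from the definition of $\tau$ on each $\alpha$-string. So with $I(y) \subseteq S$, it suffices to prove $I(r_\alpha y) \subseteq \tau(I(y), \alpha)$. Given $\beta = \beta_0 + j\alpha$ on the $\alpha$-string $\{\beta_0, \ldots, \beta_0 + k\alpha\} \subseteq R^+$, the condition $\beta \in \tau(I(y), \alpha)$ is equivalent to $|I(y) \cap \mathrm{string}| \geq j + 1$. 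I will use biconvexity of $I(y)$ together with $\alpha \in I(y)$ to observe that $I(y) \cap \mathrm{string}$ is upper-closed in the string (adding $\alpha$ to any element of $I(y)$ on the string keeps us in $I(y)$), so the goal reduces to producing some $\beta_0 + i\alpha \in I(y)$ with $i \leq k - j$.

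Next I will let $p \geq 0$ be the number of steps by which the full $\alpha$-string in $R$ extends below $\beta_0$ into $R^-$; a short calculation gives $r_\alpha(\beta_0 + i\alpha) = \beta_0 + (k - p - i)\alpha$. The argument then splits on the sign of $r_\alpha \beta$. The easy case is $k - p - j \geq 0$, where $r_\alpha \beta \in R^+$: the defining condition $y^{-1}(r_\alpha \beta) \in R^-$ for $\beta \in I(r_\alpha y)$ forces $r_\alpha \beta \in I(y)$, and $i := k - p - j \leq k - j$ does the job.

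The hard case, and the main obstacle, is $c := p + j - k \geq 1$, where $r_\alpha \beta$ lies in $R^-$ and no element on the original $\alpha$-string is obviously in $I(y)$. My plan here is to consider $\gamma := -r_\alpha \beta = -\beta_0 + c\alpha$, which is a positive root on the ``mirror'' $\alpha$-string through $-\beta_0$ (in this case $p \geq 1$, so $-\beta_0 + \alpha, \ldots, -\beta_0 + p\alpha$ all lie in $R^+$), and which satisfies $\gamma \notin I(y)$ because $y^{-1}\gamma \in R^+$. I will then run a downward induction on the mirror string, using convexity of $R^+ \setminus I(y)$ and $\alpha \in I(y)$, to propagate nonmembership from $\gamma$ all the way down to $\alpha - \beta_0 = -\beta_0 + \alpha$. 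A final application of convexity of $R^+ \setminus I(y)$ to the decomposition $\alpha = \beta_0 + (\alpha - \beta_0)$ will then force $\beta_0 \in I(y)$, giving $i = 0$. The delicate point is this detour through the mirror string: it is what converts nonmembership information about $\gamma$, which sits on a different $\alpha$-string, back into membership information for $I(y)$ at $\beta_0$ on the original string.
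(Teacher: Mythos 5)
Your proof is correct, but it takes a genuinely different route from the paper's. The paper makes the same first reduction (monotonicity of $\tau$ to pass from $S$ to $I(y)$), but then localizes to the rank-$2$ subsystem $R_U$ with $U = \vspan\{\alpha,\beta\}$, shows $\beta \in I(r_\alpha y_0)$ for the induced rank-$2$ element $y_0$, and appeals to a direct check of the proposition for inversion sets in rank $2$. You instead carry out a uniform string-counting argument: upper-closedness of $I(y)$ on the $\alpha$-string through $\beta$ (from convexity of $I(y)$ plus $\alpha \in I(y)$), the reflection formula $r_\alpha(\beta_0+i\alpha)=\beta_0+(k-p-i)\alpha$, and a case split on the sign of $r_\alpha\beta$, with the negative case handled by descending along the mirror string $-\beta_0+m\alpha$ and a final coconvexity argument on $\alpha = \beta_0 + (\alpha-\beta_0)$. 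In effect you have written out, uniformly in all ranks, the rank-$2$ verification the paper leaves implicit; what your version buys is a self-contained proof with no case analysis over rank-$2$ root systems, at the cost of more bookkeeping. Two small points to fix in the write-up: the downward propagation step ($-\beta_0+m\alpha \notin I(y)$ implies $-\beta_0+(m-1)\alpha \notin I(y)$) uses the contrapositive of convexity of $I(y)$ together with $\alpha \in I(y)$, not convexity of $R^+\setminus I(y)$ as you state (only your final step uses coconvexity); and you should explicitly dispose of the degenerate case $\beta = \alpha$, where the string-reversal formula fails --- it is vacuous because $\alpha \in I(y)$ forces $\alpha \notin I(r_\alpha y)$.
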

\begin{proof}
    If $S \subseteq S'$, then $\tau(S,\alpha) \subseteq \tau(S', \alpha)$, so we
    can assume that $S = I(y)$ and $\alpha \in I(y)$. Suppose $\beta \in
    I(r_{\alpha} y)$, and let $U$ be the subspace of $V$ spanned by $\alpha$
    and $\beta$. Let $y_0$ be the unique element of $W(R_U)$ with $I(y_0) =
    I(y)_U$. Note that $r_{\alpha} \in W(R_U)$, and $\alpha \in I(y_0)$.
    Since $\beta \in I(r_{\alpha} y)$, either $r_{\alpha} \beta \in I(y)$, or
    $r_{\alpha} \beta \in R^-$ and $-r_{\alpha} \beta \not\in I(y)$. By
    construction  $r_{\alpha} \beta \in U$, so if $r_{\alpha} \beta \in I(y)$ then 
    $r_{\alpha} \beta \in I(y_0)$ as an element of $R_U$, and it follows that
    $\beta \in I(r_{\alpha} y_0) \subseteq R_U^+$. If $r_{\alpha} \beta \in R^-$
    and $-r_{\alpha} \beta \not\in I(y)$, then $r_{\alpha} \beta \in R^-_U$,
    and $-r_{\alpha} \beta \not\in I(y_0)$, so again $\beta \in I(r_{\alpha}
    y_0)$. 

    We can check that the proposition holds for inversion sets in rank $2$. Since $\beta \in I(r_{\alpha} y_0)$, 
    \begin{equation*}
        \beta \in \tau(I(y_0), \alpha) = \tau(I(y)_U, \alpha)
            \subseteq \tau(I(y), \alpha).
    \end{equation*}
\end{proof}

On the geometric side, Proposition \ref{P:augmented} states that if $S
\subseteq \Omega T_y X(w)$ contains $I(y)$ and $\alpha \in I(y)$ then
$\tilde{\tau}(S,\alpha)$ contains $I(r_{\alpha}
y)$. This suggests looking at pairs $(S, I(y))$,
where $S$ is a
coconvex subset of $R^+$ and $I(y)$ is an inversion set contained in $S$. The
pair $(S, I(y))$ corresponds to the subset $-y^{-1} S$ of $\Omega T_{y^{-1}} X$.
Peterson translation by $\alpha \in I(y)$ sends $(S, I(y))$ to $(\tau(S,
\alpha), I(r_\alpha y))$. 
\begin{example}
    In Example \ref{Ex:smooth}, we started with the pair $(S, I(w))$, where $S = I(w)$ and $X(w)$ was smooth. We then translated $(S, I(w))$ to the pair $(S(w), \emptyset)$ at the identity.
    In general we can translate any pair $(S, I(w))$ to some pair $(S_e,
    \emptyset)$ using only roots in the inversion set. However, $S_e$ is not
    necessarily a lower order ideal. For example, if we start with the element
    $w = s_1 s_2 s_3 s_2 s_1$ from Example \ref{Ex:typeAreflection} and set $S
    = I(w)$, then
    \begin{equation*}
        \tau((I(w), I(w)), \alpha_1 + \alpha_2 + \alpha_3) = (I(w), \emptyset)
    \end{equation*}
    sends $(I(w), I(w))$ to the identity in one step. It takes at least two
    steps to translate $I(w)$ to an ideal.
\end{example}

In addition to $I(w)$ and $S(w)$, there are many other ways to construct
subsets $S$ of $R^+$ from geometric constructions on the Schubert variety
$X(w)$. The next example gives a family of coconvex sets arising in this
way.
\begin{example}\label{Ex:maxsing}
    Let $\Theta_x X(w)$ denote the span of the reduced tangent cone to $X(w)$
    at $x$. If $x$ is a smooth point of $X(w)$, then $\Theta_x X(w) = T_x
    X(w)$.  Also, if $x \in W(R)$ is smooth or $G$ is simply-laced, then
    $\Theta_x X(w)$ is the space of tangents to $T$-invariant curves in $X(w)$,
    and 
    \begin{equation*}
        \Omega \Theta_x X(w) = \{\alpha \in R : x^{-1} \alpha \in R^- \text{ and } r_{\alpha} x\leq w\}
    \end{equation*}
    (see \cite{CK03} for more background). If $x \in W(R)$ is a maximal singularity
    then 
    \begin{equation*}
        \Theta_x X(w) = \sum_{\alpha \in R^+ \setminus I(x) } \tilde{\tau}(T_{r_{\alpha}x} X(w), \alpha)
    \end{equation*}
    by a theorem of Carrell and Kuttler \cite[Theorem 1.3]{CK06}. Since $x$ is
    a maximal singularity, the points $r_{\alpha} x$ in this sum are smooth,
    and consequently $- x^{-1} \tilde{\tau}(\Omega T_{r_{\alpha} x} X(w),
    \alpha)$ is coconvex for every $\alpha \in R^+ \setminus I(x)$ by Proposition
    \ref{P:petersonfree} and Example \ref{Ex:smooth}. Since the intersection of
    convex sets is convex, the union of coconvex sets is coconvex, and consequently
    \begin{equation*}
        -x^{-1} \Omega \Theta_x X(w) = \bigcup -x^{-1} \tilde{\tau}(\Omega
            T_{r_{\alpha} x} X(w), \alpha)
    \end{equation*}
    is coconvex at every maximal singularity $x$ of $X(w)$. Although the
    freeness of $\mcA(\Omega \Theta_x X(w))$ is theoretically resolved by
    Theorem \ref{T:coconvex}, it is an open question to fully characterize
    the pairs $(w,x)$ with this property.
\end{example}

The author is unaware of any geometric proof
of Proposition \ref{P:petersonfree}, and this
might also be an interesting problem. 

\bibliographystyle{amsalpha}
\bibliography{palindromic}

\end{document}